\documentclass[journal,twoside,web]{ieeecolor}
\usepackage{generic}
\usepackage{cite}
\usepackage{amsmath,amssymb,amsfonts}
\usepackage{algorithmic}
\usepackage{graphicx}
\usepackage{algorithm,algorithmic}
\usepackage{hyperref}
\hypersetup{hidelinks=true}
\usepackage{textcomp}

\newtheorem{lemma}{Lemma}
\newtheorem{remark}{Remark}
\newtheorem{theorem}{Theorem}
\newtheorem{definition}{Definition}
\newtheorem{corollary}{Corollary}%[proposition]
\newtheorem{assumption}{Assumption}%[section]
\newtheorem{example}{Example}%[section]
\newcommand{\argmin}{\operatornamewithlimits{arg\,min}}          % argmin

\def\BibTeX{{\rm B\kern-.05em{\sc i\kern-.025em b}\kern-.08em
    T\kern-.1667em\lower.7ex\hbox{E}\kern-.125emX}}
\begin{document}
\title{Dual Effect and Certainty Equivalence Revisited \\ for a Linear-Gaussian Dual Control Problem}
% \author{First A. Author, \IEEEmembership{Fellow, IEEE}, Second B. Author, and Third C. Author Jr., \IEEEmembership{Member, IEEE}
% \thanks{This paragraph of the first footnote will contain the date on 
% which you submitted your paper for review. It will also contain support 
% information, including sponsor and financial support acknowledgment. For 
% example, ``This work was supported in part by the U.S. Department of 
% Commerce under Grant 123456.'' }
% \thanks{The next few paragraphs should contain 
% the authors' current affiliations, including current address and e-mail. For 
% example, First A. Author is with the National Institute of Standards and 
% Technology, Boulder, CO 80305 USA (e-mail: author@boulder.nist.gov). }
% \thanks{Second B. Author Jr. was with Rice University, Houston, TX 77005 USA. He is 
% now with the Department of Physics, Colorado State University, Fort Collins, 
% CO 80523 USA (e-mail: author@lamar.colostate.edu).}
% \thanks{Third C. Author is with 
% the Electrical Engineering Department, University of Colorado, Boulder, CO 
% 80309 USA, on leave from the National Research Institute for Metals, 
% Tsukuba, Japan (e-mail: author@nrim.go.jp).}}
\author{Ying Wang, Yue Ju and H{\aa}kan Hjalmarsson,  \IEEEmembership{Fellow, IEEE}
\thanks{This work was supported by the VINNOVA Competence Center AdBIOPRO 2016-05181, the Swedish Research Council through project 2025-04833; and the
Wallenberg AI, Autonomous Systems and Software Program (WASP),
funded by the Knut and Alice Wallenberg Foundation.}%
\thanks{Ying Wang, Yue Ju and H{\aa}kan Hjalmarsson are with the
Department of Decision and Control Systems, KTH Royal Institute
of Technology, SE-100 44 Stockholm, Sweden
(e-mail: yinwang@kth.se; yuej@kth.se; hjalmars@kth.se).}%
\thanks{H{\aa}kan Hjalmarsson is also with the Center for
Advanced Bio Production AdBIOPRO, KTH Royal Institute of
Technology, SE-100 44 Stockholm, Sweden.}}

\maketitle

\begin{abstract}
This paper revisits the relation between dual effect and certainty equivalence (CE) through a scalar linear-Gaussian model. 
In dual control, the current input affects not only the immediate cost but also the information available for future decisions, and hence the future cost. 
Dynamic programming (DP) provides a natural framework for formulating dual control problems. 
At each step of the Bellman recursion, an optimal input minimizes
the sum of the stage cost and the expected cost-to-go.
We restrict attention to stage costs minimized by the CE input.
However, CE may not minimize the Bellman objective because of
the input-dependent expected cost-to-go.
Classical no-dual-effect (NDE) conditions guarantee CE optimality in certain problem classes by requiring the conditional distribution of the estimation error to be independent of past controls.
In the present model, we observe that CE can remain optimal at the level of a single Bellman recursion even when the classical NDE conditions fail, 
provided that the expected cost-to-go is independent of the current input.
Motivated by this observation, we introduce a notion of NDE formulated directly at the level of  the Bellman objective, which we call Bellman-side NDE.
Under some mild regularity conditions, we show that Bellman-side NDE
%for the scalar linear-Gaussian model
can be characterized by a heat equation.
This characterization identifies the
class of Bellman-side NDE functions, and further yields a decomposition of a generic cost-to-go function into a Bellman-side NDE component and a dual-effect component.
\end{abstract}

\begin{IEEEkeywords}
Certainty Equivalence, Dual Control, Adaptive Control, Dynamic Programming.
% Enter key words or phrases in alphabetical order, separated by commas. Using the IEEE Thesaurus can help you find the best standardized keywords to fit your article. Use the thesaurus access request form for free access to the IEEE Thesaurus: \underline{https://www.ieee.org/publications/services/thesaurus-acce}\\
% \underline{ss-page.com.}
\end{IEEEkeywords}

\section{Introduction}

Certainty equivalence (CE) is the simplest and most widely used policy in partially observed stochastic control problems~\cite{astrom1995adaptive}. 
CE replaces the unknown parameter by its current estimate and acts as if this estimate were correct. 
Remarkably, in the partially observed linear-quadratic-Gaussian (LQG) problem, this simple policy is  optimal~\cite{joseph1961linear,gunckel1963general}.
Beyond LQG, CE is generally suboptimal, since it ignores the value of information that the input generates for future decisions.
Despite the simplification CE offers, a transparent characterization of \emph{when} it is optimal is still lacking.
This question is the subject of the present paper.

% By contrast, CE minimizes the current cost based on the current parameter estimate and thus serves as the exploitation input. 
% Whether CE also minimizes the Bellman objective depends on the input dependence of the expected cost-to-go: CE is optimal if and only if the CE input remains a minimizer after this future-cost term is included.

The classical answer to the CE-optimality question is the no-dual-effect (NDE) condition of Bar-Shalom and Tse~\cite{barshalom1974dual}: under the model and cost assumptions considered there, CE is optimal if the conditional distribution of the estimation error is independent of past controls. We will refer to this as posterior-side NDE.
Moreover, NDE was argued to be a necessary and sufficient condition for CE optimality in~\cite{barshalom1974dual}.
Derpich and Y\"uksel~\cite{derpich2023dual} identified a subtle error in Bar-Shalom and Tse's
argument that CE optimality implies NDE, and gave a counterexample. They introduced a relaxed posterior-side NDE notion, requiring the above independence over a restricted policy class rather than all admissible policies. This weaker condition still suffices for CE optimality in the problem class considered there.
Recent work on dual model predictive control~\cite{baltussen2026separation} adopts a
related perspective, proposing metrics that empirically quantify how the policy depends on the posterior covariance.

These posterior-side conditions concern the dependence of the estimation-error distribution on past controls.
Dynamic programming (DP) offers a complementary perspective: at each stage, the optimal input minimizes the Bellman objective, which combines the stage cost with the expected cost-to-go.
This structure exposes the input's dual effect~\cite{feldbaum1960dual}:
the input determines the current cost and can also affect the expected cost-to-go through the information it generates for future decisions.
Balancing current performance against the value of this information gives rise to the exploration--exploitation tradeoff at the core of dual control~\cite{feldbaum1960dual,astrom1995adaptive,mesbah2018stochastic}.

Building on this DP perspective, we address the question of
CE optimality by examining the input dependence of the expected
cost-to-go at the level of a single Bellman recursion.
When the CE input minimizes the stage cost, it also minimizes
the Bellman objective if the expected cost-to-go is independent
of the input.
This independence can hold even when the conditional distribution
of the next-step estimation error depends on the current input, i.e., when classical posterior-side NDE fails.
This observation motivates a Bellman-side NDE condition formulated directly in terms of the expected cost-to-go.

This perspective is natural, but exploiting it requires a benchmark model:
one that preserves the essential dual effect while keeping the state dynamics
explicit enough to expose how the input acts on the expected cost-to-go.
Existing settings entangle several effects at once, such as system dynamics,
nonlinear posterior updates, and exploration directions in multiple-input
problems. These make it difficult to isolate what exactly makes CE optimal or
not.

We therefore study the scalar linear-Gaussian model 
\begin{equation}
    y_t=\theta u_t+e_t,\label{eq:io}
\end{equation}
where $\theta$ is an unknown parameter, $u_t$ is the input, $y_t$ is the scalar observation, and $e_t$ is Gaussian noise. Under a Gaussian prior on $\theta$, the posterior distribution remains Gaussian with mean $m_t$ and variance $P_t$, which together form the belief state $x_t=(m_t,P_t)$.
This static and scalar model strips away system dynamics and multidimensional exploration while retaining the dual effect. Even here, the backward Bellman recursion generally does not yield closed-form expressions for the optimal cost-to-go functions.
The literature mostly reports approximate, numerical, or asymptotic solutions~\cite{prescott1972multi,taylor1974asymptotic,lobo1999policies,wieland2000learning,wieland2000monetary,han2006approximate,wang2024regret,wang2025revisiting}. Hence, our analysis does not attempt to solve this recursion or to establish CE
optimality at every stage. Instead, we characterize CE optimality at the level
of a single Bellman recursion for a generic cost-to-go function, which needs not
arise from the Bellman recursion of a specific control problem.
%Carrying this out requires a benchmark model in which the Bellman objective can be analyzed exactly. 
%We therefore study the scalar linear-Gaussian model $y_t=\theta u_t+e_t$ with a Gaussian prior~\cite{prescott1972multi,wang2025revisiting}. This model provides a minimal but nontrivial benchmark. It strips away the interference of system dynamics and multidimensional exploration directions while retaining the essential dual effect. It admits closed-form dynamics for the posterior mean $m$ and variance $P$, and the variance depends explicitly on the input, so posterior-side NDE fails. 
This viewpoint allows us to identify a nontrivial class of generic cost-to-go functions for which CE is optimal for the present model. The contributions are as follows.
\begin{itemize}
\item We introduce a Bellman-side notion of NDE: a cost-to-go function $J$ has no dual effect at a belief state if the expected cost-to-go it induces is insensitive to the current input. 
This NDE notion yields a sufficient condition for CE optimality.
\item We characterize Bellman-side NDE analytically: 
% under mild regularity, 
$J$ is Bellman-side NDE if and only if 
% $\partial_P J-\frac{1}{2}\partial_{mm} J=0$, i.e., 
$J$ solves a heat equation. This NDE notion thus admits a local differential characterization.
\item We prove that Bellman-side NDE functions are exactly the Gaussian convolutions of their complete-information boundary functions within a certain function class.
%The heat polynomials span the subclass generated by polynomial boundary functions, which is a subclass of all Bellman-side NDE functions.
%, with the heat polynomials being the basis of the polynomial function space.
\item We decompose a generic cost-to-go function into a Bellman-side NDE component and a dual-effect component, and show that the entire exploration incentive is carried by the latter.
\end{itemize}

The remainder of the paper is organized as follows. Section~\ref{sec:problem} formulates the scalar linear-Gaussian dual control problem and its belief-state DP formulation. Section~\ref{sec:definition} introduces Bellman-side NDE and compares it with posterior-side NDE. Section~\ref{sec:source} establishes the source characterization of Bellman-side NDE. Section~\ref{sec:function_space} develops the Gaussian convolution representation of Bellman-side NDE functions and characterizes the corresponding NDE function space. Section~\ref{sec:bellman_side_decomposition} derives the cost-to-go function decomposition. Section~\ref{sec:numerical} presents the numerical studies, and Section~\ref{sec:conclusion} concludes the paper.

\paragraph*{Notation}
The sets of real and positive real numbers are denoted by $\Re$ and $\Re_{++}$. The notation $E\{X\}$ denotes the expectation of $X$.
% , while $E_\theta\{X\}$ denotes the expectation under a fixed parameter value $\theta$, with the expectation taken over all remaining randomness.
The notation $X\sim N(a,b)$ means that
$X$ is Gaussian with mean $a$ and variance $b$.
The notation $X\mid Y$ denotes the
conditional distribution of $X$ given $Y$. For $c\in\Re$, $\operatorname{sgn}(c)\in\{-1,+1\}$ denotes the sign of $c$, with
the convention $\operatorname{sgn}(0)=1$.
The notation \(x\downarrow0\) denotes the limit as \(x\) approaches
zero from above.
For functions $a,b$ of a real variable, $a(x) = o(b(x))$ as $x \downarrow 0$ 
means $a(x)/b(x) \to 0$ as $x \downarrow 0$. 
The notation $C^\infty(X)$ denotes the set of functions on $X$ 
possessing continuous partial derivatives of all orders.

\section{A Scalar Linear-Gaussian Dual Control Problem}
\label{sec:problem}

Section~\ref{subsec:dual_control_formulation} formulates the scalar linear-Gaussian dual control problem in terms of the available information.
The challenge here is that information grows with time.
The remainder of the section therefore recasts the problem as a DP with a two-dimensional belief state.
Specifically, Section~\ref{subsec:belief_state_dynamics} derives the belief state dynamics.
Section~\ref{subsec:belief_state_problem} formulates this belief-state DP, and Section~\ref{subsec:risk_to_state} specifies the stage cost.  Finally, Section~\ref{subsec:research_problem} states the research problem: when does CE optimality hold?

\subsection{Information-Based Dual Control Formulation}
\label{subsec:dual_control_formulation}

We begin from the known-parameter decision problem. Suppose the
parameter $\theta\in\Re$ in~\eqref{eq:io} is known. The controller then chooses an
input $u\in U=\Re$ by minimizing a scalar loss function $l(\theta,u)$. 
We define $\upsilon^*(\theta)$, which we will refer to as an \emph{oracle input}, to be a minimizer of the loss:
\begin{equation}\label{eq:oracle}
	\upsilon^*(\theta)
	\in
	\argmin_{u\in U} l(\theta,u).
\end{equation}
% denote an \emph{oracle input}.

In dual control, the parameter $\theta$ is unknown and must be learned from input-output measurements generated by the scalar linear-Gaussian model~\eqref{eq:io},
% \begin{equation}\label{eq:io}
%     y_t=\theta u_t+e_t,
%     \qquad t=0,1,\ldots,
% \end{equation}
where $\{e_t\}$ is an independent and identically distributed (i.i.d.) sequence of standard Gaussian random
variables. %The prior uncertainty of $\theta$ is modeled as
We further assume that $\theta$ is a random variable with distribution
\[
    \theta\sim N(m_0,P_0),
    \qquad
    m_0\in\Re,\quad P_0\in\Re_{++},
\]
with $\theta$ being independent of the noise sequence $\{e_t\}$.

We formulate the dual control problem in terms of the
\emph{information} available to the controller. Let $I_0=(m_0,P_0)$
denote the initial information, and for $t\geq1$ define the
information available before choosing $u_t$ by
% \footnote{Since each
% input $u_s=\bar\mu_s(I_s)$, $s=0,1,..., t-1$ is a deterministic function of the past observations, the information $I_t$ is generated by the initial information $I_0$ and the observations
% $(y_0,\ldots,y_{t-1})$.}
\[
    I_t
    =
    (m_0,P_0,u_0,y_0,\ldots,u_{t-1},y_{t-1}).
\]
After the input $u_t$ is applied and the measurement $y_t$ is
obtained, the information evolves according to
$I_{t+1}=(I_t,u_t,y_t)$.

The inputs over a horizon $T$ are determined by a causal
\emph{policy} $\bar\pi_T=\{\bar\mu_0,\ldots,\bar\mu_{T-1}\}$, where each $\bar\mu_t$ is an $I_t$-measurable decision rule resulting in the input at time $t$ being
\[
    u_t=\bar\mu_t(I_t),
    \qquad
    t=0,\ldots,T-1.
\]
Here the input $u_t$ has a dual effect~\cite{feldbaum1960dual}: it incurs
the current loss $l(\theta,u_t)$, and it determines how much
information the next measurement $y_t$ carries about $\theta$.
% through the model~\eqref{eq:io}.

For a policy $\bar\pi_T\in\bar\Pi_T$, with $\bar\Pi_T$ denoting the class of causal
policies, define the cost function
\begin{equation}
        J_{\bar\pi_T}(I_0)
    =
    E\Big\{
        \sum_{t=0}^{T-1}
        l\big(\theta,\bar\mu_t(I_t)\big)
    \Big\},\label{eq:I_cost}
\end{equation}
where the expectation is over the prior distribution of $\theta$ and
the noise sequence. The finite-horizon dual control problem is
\begin{equation}
        \bar\pi_T^*
    \in
    \argmin_{\bar\pi_T\in\bar\Pi_T}
    J_{\bar\pi_T}(I_0). \label{eq:I_Dual}
\end{equation}

In this information-based formulation, the dimension of $I_t$ grows with $t$, so the decision rules $\bar\mu_t$ act on domains of increasing dimension.
Optimizing over such a policy class directly is intractable.
To handle this, we proceed by recasting the problem as a DP with a two-dimensional state.

%In this information-based formulation, the dimension of $I_t$ grows with $t$.
%Consequently, using $I_t$ directly as the state leads to an increasingly high-dimensional state space, which is computationally intractable. This motivates us to replace $I_t$ with an equivalent fixed-dimensional sufficient state

\subsection{Belief-State Dynamics}
\label{subsec:belief_state_dynamics}

Under the Gaussian prior and the linear-Gaussian
model~\eqref{eq:io}, the posterior distribution of $\theta$ remains Gaussian at every stage~\cite[Sec.11]{ljung1999system}, i.e.,
\begin{equation}\label{eq:Gaussianity}
 \theta\mid I_t\sim N(m_t,P_t),
\end{equation}
where $m_t$ and $P_t$ denote the posterior mean and variance, respectively. 
We then define the two-dimensional \emph{belief state}
\[
x_t=(m_t,P_t)\in X,
\qquad
X=\Re\times\Re_{++},
\]
which fully parametrizes the conditional distribution of $\theta$
given $I_t$, and thus serves as a sufficient statistic for $\theta$~\cite{Lehmann98}.
The mean and variance update recursively~\cite[Sec.11]{ljung1999system}:
\begin{align}
    m_{t+1}
    &=
    m_t
    +
    \frac{P_tu_t}{1+P_tu_t^2}
    \bigl(y_t-u_tm_t\bigr),
    \label{eq:theta_update_app}\\
    P_{t+1}
    &=
    \frac{P_t}{1+P_tu_t^2}.
    \label{eq:P_update_app}
\end{align}

The update~\eqref{eq:theta_update_app}-\eqref{eq:P_update_app} is more
convenient in a form that separates the effect of the input from the randomness
of the measurement. The next lemma provides such a form, in which the input
enters only via the \emph{variance reduction} it induces,
\begin{equation}\label{eq:eta}
	\eta(P_t,u_t) = P_t - P_{t+1} = \frac{P_t^2u_t^2}{1+P_tu_t^2},
\end{equation}
while the randomness enters through a standardized innovation,
\begin{equation}\label{eq:w}
	w_t = \mathrm{sgn}(u_t)\,\frac{y_t-u_tm_t}{\sqrt{1+P_tu_t^2}}.
\end{equation}

\begin{lemma}[Belief-state transition]
\label{lem:belief_state_transition_eta}
Consider the measurement model~\eqref{eq:io} with belief state
$x_t=(m_t,P_t)$ and $I_t$-measurable input $u_t$. Then the standardized innovation satisfies $w_t\mid (x_t,u_t)\sim N(0,1)$, and the update \eqref{eq:theta_update_app}-\eqref{eq:P_update_app} can be expressed as
\begin{equation}
    m_{t+1}=
        m_t+\sqrt{\eta(P_t,u_t)}\,w_t,\quad
    P_{t+1}=P_t-\eta(P_t,u_t),
    \label{eq:dynamics_m_P_def}
\end{equation}
where $\eta(P_t,u_t)$ is given in~\eqref{eq:eta}.
\end{lemma}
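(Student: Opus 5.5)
The proof has two parts: the distribution of $w_t$, and an algebraic rewriting of the update in terms of $\eta$ and $w_t$.

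\textbf{Distribution of $w_t$.} We compute the predictive distribution of $y_t$ given $(x_t,u_t)$. By \eqref{eq:Gaussianity}, $\theta\mid I_t\sim N(m_t,P_t)$. Since $e_t$ is standard Gaussian, independent of $\theta$ and of the past noise, it is independent of $I_t$ and hence of $\theta$ given $I_t$. Because $u_t$ is $I_t$-measurable, it is a constant once we condition on $I_t$. Therefore
\[
y_t-u_tm_t=u_t(\theta-m_t)+e_t
\]
is, conditionally on $I_t$, a sum of independent Gaussians with mean $0$ and variance $u_t^2P_t+1$. Dividing by $\sqrt{1+P_tu_t^2}$ and multiplying by $\operatorname{sgn}(u_t)\in\{-1,+1\}$ leaves the law unchanged, since $N(0,1)$ is symmetric. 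Hence $w_t\mid I_t\sim N(0,1)$.

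This conditional law depends on $I_t$ only through $(m_t,P_t,u_t)$; in fact it does not depend on $I_t$ at all. Conditioning down to the coarser $\sigma$-field generated by $(x_t,u_t)$ via the tower property therefore gives $w_t\mid(x_t,u_t)\sim N(0,1)$.

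\textbf{The $P$-update.} This is immediate from definition \eqref{eq:eta}: $P_{t+1}=P_t-\eta(P_t,u_t)$ is just \eqref{eq:P_update_app} rearranged, since $\eta$ was defined as $P_t-P_{t+1}$. One checks that $P_t-P_t/(1+P_tu_t^2)=P_t^2u_t^2/(1+P_tu_t^2)$.

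\textbf{The $m$-update.} From \eqref{eq:w}, using $\operatorname{sgn}(u_t)^2=1$, we can solve for the innovation:
\[
y_t-u_tm_t=\operatorname{sgn}(u_t)\sqrt{1+P_tu_t^2}\,w_t.
\]
Substituting into \eqref{eq:theta_update_app}, the gain term becomes
\[
\frac{P_t u_t\operatorname{sgn}(u_t)}{\sqrt{1+P_tu_t^2}}\,w_t=\frac{P_t|u_t|}{\sqrt{1+P_tu_t^2}}\,w_t,
\]
using $u_t\operatorname{sgn}(u_t)=|u_t|$. Since $P_t>0$, this coefficient equals $\sqrt{P_t^2u_t^2/(1+P_tu_t^2)}=\sqrt{\eta(P_t,u_t)}$, which gives the claimed form of $m_{t+1}$.

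The sign factor in $w_t$ is exactly what makes this coefficient the nonnegative square root. The edge case $u_t=0$ is consistent with the convention $\operatorname{sgn}(0)=1$: then $\eta=0$, both updates are trivial, and $w_t=y_t=e_t\sim N(0,1)$.

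\textbf{Main point of care.} The only subtle step is the conditioning argument. We must justify that $u_t$ can be treated as a constant given $I_t$, which holds because it is $I_t$-measurable. We must also justify that $e_t$ is independent of $(\theta,I_t)$, which holds because $I_t$ is a function of $\theta$, $e_0,\dots,e_{t-1}$ and the prior. Everything else is algebra.
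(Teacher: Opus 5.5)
Your proposal is correct and follows essentially the same route as the paper's proof. You derive $y_t-u_tm_t\sim N(0,1+P_tu_t^2)$ given $I_t$, use symmetry for the sign factor and the $I_t$-measurability of $x_t$ to pass to conditioning on $(x_t,u_t)$, and then substitute the inverted innovation using $u_t\operatorname{sgn}(u_t)=|u_t|$. One phrase should be tightened: independence of $e_t$ from $I_t$ alone does not give conditional independence of $e_t$ and $\theta$ given $I_t$; that step needs joint independence of $e_t$ from $(\theta,I_t)$, which you do correctly justify in your final paragraph.
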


\begin{proof}
See Appendix~\ref{app:belief_state_transition_eta}.
\end{proof}

With the above lemma, the belief-state transition in~\eqref{eq:dynamics_m_P_def} admits the compact representation
\begin{equation}
    x_{t+1}
    =
    f(x_t,u_t,w_t)
    =
    \bigl(
        m_t+\sqrt{\eta(P_t,u_t)}\,w_t,\;
        P_t-\eta(P_t,u_t)
    \bigr).
    \label{eq:dynamics_f_def}
\end{equation}
In the subsequent analysis, we suppress the arguments of $\eta$ when there is no
confusion. For a fixed $P_t>0$, the map $u_t\mapsto\eta(P_t,u_t)$ sends $\Re$
onto $[0,P_t)$. Solving $\eta_t=\eta(P_t,u_t)$ gives
\[
    u_t=\pm\sqrt{\frac{\eta_t}{P_t(P_t-\eta_t)}},
\]
so every admissible variance reduction $\eta_t\in[0,P_t)$ can be generated by some finite input.
% unique up to sign for $\eta>0$. Note that $\eta\to P$
% requires $|u|\to\infty$: the posterior variance cannot be annihilated
% in one step.

\subsection{Belief-State DP Formulation}
\label{subsec:belief_state_problem}

To reformulate the dual control problem into a belief-state DP, we use a standard result in partially observed stochastic control:  a partially observed problem can be reformulated as a fully observed belief-state problem by taking the posterior distribution of the unobserved state as the state of the DP; see~\cite[Ch.~10]{bertsekas1978stochastic}, \cite[Ch.~6]{kumar1986stochastic}, and \cite[Ch.~8]{Krishnamurthy2025Partially}.
In the present problem, the unobserved state is the static parameter
$\theta$. Its posterior remains Gaussian for each $t$, stated in~\eqref{eq:Gaussianity}.
Hence, the posterior distribution of $\theta$ is exactly parameterized by the two-dimensional
belief state
$
x_t=(m_t,P_t)
$
introduced in Section~\ref{subsec:belief_state_dynamics}.
The growing information $I_t$ can thus be summarized by $x_t$ for the purpose of DP.

The corresponding \emph{belief-state DP} has state $x_t$ and state transition $f$ given by Lemma~\ref{lem:belief_state_transition_eta}. We now specify the stage cost as the \emph{posterior risk}, obtained by taking the conditional expectation of the original loss under the current posterior. Specifically, for $x_t=(m_t,P_t)\in X$, define the \emph{stage cost}:
\begin{equation}\label{eq:stage_cost}
    g(x_t,u_t)
    =
    E
    \big\{
        l(\theta,u_t)
    \big\},
\end{equation}
where the expectation is taken over $\theta\mid x_t\sim N(m_t,P_t)$.
% Thus, $g(x_t,u_t)$ is the posterior risk of applying $u_t$ when the posterior distribution of $\theta$ is represented by $x_t$.

We now recast the information-based dual control problem of
Section~\ref{subsec:dual_control_formulation} in belief-state DP form.
We are interested in a belief-state feedback policy over a horizon $T$: $\pi_T=\{\mu_0,\ldots,\mu_{T-1}\}$ with measurable decision rule $\mu_t:X\to U$, and the policy set is denoted
$\Pi_T$. The cost function of $\pi_T$ from $x_0=(m_0,P_0)$ is
\begin{equation}
\label{eq:cost_f_2i}
    J_{\pi_T}(x_0)
    =
    E\Big\{
        \sum_{t=0}^{T-1}
        g\big(x_t,\mu_t(x_t)\big)
        +
        J_T(x_T)
    \Big\},
\end{equation}
where $x_t$ evolves according to~\eqref{eq:dynamics_f_def}, the
expectation is over 
%the prior $x_0$ and 
the standardized innovation $\{w_t\}$, and
$J_T:X\to\Re$ is a terminal cost. 
Here we choose \(J_T\equiv0\), so that the belief-state cost~\eqref{eq:cost_f_2i} coincides with the cost~\eqref{eq:I_cost} in the information-based formulation.
% .
% : by the cost sufficiency of
% Lemma~\ref{lem:info-state}(a),
% \[
%     E\Big\{
%         \sum_{t=0}^{T-1}
%         c\big(\theta,u_t\big)
%     \Big\}
%     =
%     J_{\pi_T}(x_0),
% \]
The original dual control problem in~\eqref{eq:I_Dual} thus reduces to the belief-state DP problem
\begin{equation}
\label{eq:belief_problem}
    J_0^*(x_0)
    =
    \inf_{\pi_T\in\Pi_T}
    J_{\pi_T}(x_0).
\end{equation}
% The general terminal cost $J_T$ is carried along for the
% generality of the recursion below; it also accommodates approximate
% cost-to-go functions, which reappear in
% Section~\ref{subsec:research_problem}.

The optimal cost~\eqref{eq:belief_problem} is computed by DP: the backward recursion is initialized at $J_T^*=J_T$
and, for $t=T-1,\ldots,0$,
\begin{equation}
\label{eq:finite_horizon_bellman}
    J_t^*(x_t)
    \!=\!
    \inf_{u_t\in U}
    \left[
        g(x_t,u_t)
        \!+\!
        E\big\{
            J_{t+1}^*\big(f(x_t,u_t,w_t)\big)
        \big\}
    \right],
    \
    \forall x_t\!\in\! X.
\end{equation}
Every stage of this recursion is a minimization of the same \emph{Bellman objective} form: a stage cost plus an \emph{expected cost-to-go}. 
Since this form does not change with $t$, we suppress the time subscript and write $x=(m,P)\in X$ and $u\in U$ for a belief state and input, in place of $x_t=(m_t,P_t)$ and $u_t$, respectively.

\subsection{CE-Optimal Stage Cost}
\label{subsec:risk_to_state}

Recall the definition of the oracle input $\upsilon^*(\theta)$ in~\eqref{eq:oracle}. The CE input replaces the unknown
$\theta$ by its posterior mean,
\begin{equation}\label{eq:ce_input}
	\nu(x)=\upsilon^*(m),\qquad x\in X.
\end{equation}

The stage cost $g(x,u)$ is defined in~\eqref{eq:stage_cost} as the posterior risk of the known-parameter cost $l$.
We restrict attention to those $l$ for which the CE input already minimizes
$g(x,\cdot)$, so that CE coincides with the one-step greedy policy.
% The dual-effect mechanism indicates that any failure of CE optimality originates from the dependence of the expected cost-to-go on the current input.
The class of stage costs we consider is specified as follows.

\begin{assumption}[CE-optimal stage cost]
\label{ass:ce_centered_cost}
The stage cost $g(x,u)$, defined in~\eqref{eq:stage_cost}, admits the decomposition
\begin{equation}
    g(x,u)=\ell\bigl(x,u-\nu(x)\bigr)+\psi(x), \qquad x\in X, \ u\in U,
    \label{eq:cost}
\end{equation}
where the map $r\mapsto\ell(x,r)$ for each $x$ has a unique minimizer at $r=0$. 
% Then $g(x,\cdot)$ is uniquely minimized by the CE input $u=\nu(x)$ in~\eqref{eq:ce_input}.
\end{assumption}

Under Assumption~\ref{ass:ce_centered_cost}, the stage cost $g(x,\cdot)$ is uniquely minimized by the CE input $u=\nu(x)$ in~\eqref{eq:ce_input}.
Assumption~\ref{ass:ce_centered_cost} is satisfied by a class of commonly used stage costs. For illustration, we provide the following two examples.
In both cases, $g(x,u)$ is minimized with respect to (w.r.t.) $u$ by the CE input $\nu(x)$.

\begin{example}[Quadratic benchmark]
\label{eg:quadratic}
Suppose that the oracle input in~\eqref{eq:oracle} is affine in $\theta$ and is denoted by 
$\upsilon^*(\theta)=k\theta+b$ with $k\neq0$ and $b\in\Re$. Then the CE input in~\eqref{eq:ce_input} is
$\nu(x)=\upsilon^*(m)=km+b$ for all $x\in X$.

For the quadratic loss $l(\theta,u)=(u-\upsilon^*(\theta))^2$, the stage cost is
\begin{equation}\label{eq:g_quadratic}
	   g(x,u)=E\{(u-\upsilon^*(\theta))^2\}=(u-(km+b))^2+k^2P,
\end{equation}
satisfying~\eqref{eq:cost} with $\ell(x,r)=r^2$ and $\psi(x)=k^2P$. 
%Setting
%$k=1$, $b=0$ yields
%\begin{equation}
%\label{eg:quadratic}
%    g(x,u)=(u-m)^2+P.
%\end{equation}
\end{example}

\begin{example}[Quartic benchmark]
\label{eg:quartic}
Under the same affine oracle input $\upsilon^*(\theta)$, consider the quartic loss
 $l(\theta,u)=(u-\upsilon^*(\theta))^4$. The corresponding  stage cost is
\begin{align}
&g(x,u)=E\{(u-\upsilon^*(\theta))^4\}\label{eq:g_quartic}
\\
=&(u-(km+b))^4+6k^2P(u-(km+b))^2+3k^4P^2, \nonumber
\end{align}
satisfying~\eqref{eq:cost} with $\ell(x,r)=r^4+6k^2Pr^2$ and $\psi(x)=3k^4P^2$.
%Setting $k=1$, $b=0$ yields
%\begin{equation}
%\label{eq:quartic_stage_cost}
%    g(x,u)=(u-m)^4+6P(u-m)^2+3P^2.
%\end{equation}
\end{example}

The belief-state DP problem~\eqref{eq:finite_horizon_bellman} is now fully
specified by the belief state and its dynamics in
Section~\ref{subsec:belief_state_dynamics}, together with the stage cost in
Section~\ref{subsec:risk_to_state}.
Despite this reduction to a two-dimensional state, the backward Bellman
recursion generally does not yield closed-form expressions for the optimal
cost-to-go functions.
This makes it difficult to verify CE optimality directly at every stage.
We therefore examine CE optimality at the level of a single Bellman
recursion, as formulated next.

\subsection{Research Problem}
\label{subsec:research_problem}

The objective of this work is not to solve a particular DP problem. Instead, we study CE optimality at the level of a single Bellman recursion for a generic cost-to-go function \(J:X\to\Re\). 
The function \(J\) may represent the optimal cost-to-go
$J_{t+1}^*$ in~\eqref{eq:finite_horizon_bellman}, or an approximate cost-to-go function used in its place.
We require the following mild well-posedness condition.
\begin{assumption}
	\label{ass:well_defined}
	The function $J:X\to\Re$ satisfies
	\[
	E\big\{\big|J\big(f(x,u,w)\big)\big|\big\}<\infty,
	\qquad
	\forall x\in X,\ u\in U,
	\]
    where $f(x,u,w)$ is given by~\eqref{eq:dynamics_f_def}.
\end{assumption}

Under Assumption~\ref{ass:well_defined}, we define the Bellman objective for a generic cost-to-go function
\begin{equation}
	\label{eq:Q}
	Q_J(x,u) = g(x,u) + E\big\{ J\big(f(x,u,w)\big) \big\}.
\end{equation}
The backward DP recursion in~\eqref{eq:finite_horizon_bellman} is recovered by taking $J=J_{t+1}^*$ at stage $t$.
In what follows, we study~\eqref{eq:Q} under Assumptions~\ref{ass:ce_centered_cost} and~\ref{ass:well_defined}, with $f$ given by~\eqref{eq:dynamics_f_def}.

The input $u$ enters~\eqref{eq:Q} through both terms: it determines the stage cost $g(x,u)$,
which is minimized by the CE input $\nu(x)$ by
Assumption~\ref{ass:ce_centered_cost}, and it determines the expected cost-to-go through the belief-state transition $f$. 
The transition $f$ is given explicitly in~\eqref{eq:dynamics_f_def}, and thus the expected cost-to-go becomes
\begin{equation}\label{eq:expected_future_cost}
	E\big\{ J\big(f(x,u,w)\big) \big\} = E\big\{J\big(m+\sqrt{\eta(P,u)}w,P-\eta(P,u)\big)\big\}.
\end{equation}

We can now formulate CE optimality in terms of the Bellman objective:
when does the expected cost-to-go~\eqref{eq:expected_future_cost} not move the minimizer of the Bellman objective~\eqref{eq:Q} away from the CE input $\nu(x)$? 
Equivalently, when do $g(x,u)$  and $g(x,u)+E\big\{ J\big(f(x,u,w)\big) \big\}$ share the same  minimizer?
This motivates a no-dual-effect (NDE) condition defined from the Bellman-objective side, which we refer to as \emph{Bellman-side NDE}.
This structural question is the focus of the remainder of the paper.\footnote{Establishing policy-level CE optimality for a finite-horizon DP in~\eqref{eq:finite_horizon_bellman} requires this structural CE optimality to hold at every stage, which lies beyond the scope of the present paper.}

\section{Bellman-side No-Dual-Effect}
\label{sec:definition}

This section shifts the NDE notion from the posterior side to the Bellman-objective side.
Section~\ref{subsec:Posterior_NDE} recalls the classical posterior-side NDE notion
and shows that it fails in our setting. Sections~\ref{subsec:dual_effect_function}
and~\ref{subsec:BNDE} introduce the dual-effect function and the resulting
Bellman-side NDE notion. Section~\ref{subsec:CE_sufficient} shows that Bellman-side NDE is a sufficient condition for CE optimality.

\subsection{Posterior-side No-Dual-Effect}
\label{subsec:Posterior_NDE}
We first revisit the posterior-side NDE notion, studied in~\cite{barshalom1974dual,derpich2023dual}.

\begin{definition}[Posterior-side NDE]
\label{def:posterior_side_NDE}
A  partially observed stochastic control problem is said to satisfy posterior-side NDE if the conditional distribution of the estimation error of the unobserved state, given the information history, is independent of the past control inputs.
\end{definition}

Under the corresponding model and cost assumptions, posterior-side NDE is a standard sufficient condition for CE optimality~\cite{barshalom1974dual,derpich2023dual}. 
This NDE notion does not hold, however, in the setting considered here.

\begin{lemma}[Failure of posterior-side NDE]
\label{lem}
The scalar linear-Gaussian model~\eqref{eq:io} does not satisfy posterior-side NDE.
\end{lemma}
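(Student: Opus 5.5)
The plan is to exhibit the dependence explicitly: the estimation error $\tilde\theta_t=\theta-m_t$ given $I_t$ has a distribution whose variance is an explicit function of the past inputs. By~\eqref{eq:Gaussianity}, $\theta\mid I_t\sim N(m_t,P_t)$. Since $m_t$ is $I_t$-measurable, it follows that
\[
\tilde\theta_t\mid I_t\sim N(0,P_t).
\]
Hence it suffices to show that $P_t$ depends on $u_0,\ldots,u_{t-1}$.

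I would iterate the variance update~\eqref{eq:P_update_app}, equivalently the second component of~\eqref{eq:dynamics_m_P_def}. Since $1/P_{t+1}=1/P_t+u_t^2$, induction gives
\[
P_t=\Bigl(P_0^{-1}+\textstyle\sum_{s=0}^{t-1}u_s^2\Bigr)^{-1}.
\]
This expression is deterministic given the inputs, independent of the observations, and strictly decreasing in each $u_s^2$.

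To conclude, I would fix $t=1$ and compare two admissible policies from the same $I_0$, namely $u_0=0$ and $u_0=1$. The first gives $\tilde\theta_1\mid I_1\sim N(0,P_0)$. The second gives $\tilde\theta_1\mid I_1\sim N\bigl(0,P_0/(1+P_0)\bigr)$. These distributions differ because $P_0>0$. Thus the conditional distribution of the estimation error depends on the past control, so Definition~\ref{def:posterior_side_NDE} fails.

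The only delicate point is interpretive: one must make sure that ``independent of past controls'' is read as invariance across admissible policies, which is the sense used in~\cite{barshalom1974dual,derpich2023dual}. Under that reading the two-policy comparison suffices. It also holds under the relaxed restricted-policy-class notion, provided the class contains two policies with different $u_0^2$, and the CE class generically does, since $\nu$ is nonconstant in $m_0$ when $k\neq0$.
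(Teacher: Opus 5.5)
Your argument is correct and is essentially the paper's: both reduce posterior-side NDE to input-independence of the posterior variance via $\theta-m_t\mid I_t\sim N(0,P_t)$, and then use that $u\mapsto P/(1+Pu^2)$ is nonconstant; your closed form $P_t=(P_0^{-1}+\sum_{s<t}u_s^2)^{-1}$ and the explicit $u_0=0$ versus $u_0=1$ comparison just make this concrete. Your closing remark on the relaxed restricted-class notion is not needed for the lemma, and its justification is loose: from a fixed $I_0$ the CE rule produces a single input $u_0=\nu(m_0)$, so varying $m_0$ changes the initial information rather than supplying two policies from the same $I_0$.
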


\begin{proof}
In the scalar linear-Gaussian model~\eqref{eq:io}, the unobserved state is the parameter 
$\theta$. Then the estimation error satisfies $\theta-m\sim N(0,P)$ conditional on the information. Thus, posterior-side NDE reduces to the requirement that $P$ evolve independently of the input.
However, for every \(P>0\), the map $u\mapsto\frac{P}{1+Pu^2}$ in the variance update~\eqref{eq:P_update_app} is not constant on $U=\Re$. Hence, the input affects the estimation error, and posterior-side NDE does not hold.
\end{proof}

%Although posterior-side NDE fails here, CE remain optimal when the expected cost-to-go~\eqref{eq:expected_future_cost} does not move the minimizer of the Bellman objective~\eqref{eq:Q} away from the CE input.
%This motivates the Bellman-side NDE notion introduced next.

The failure of posterior-side NDE does not by itself imply that the CE input is
not optimal. The posterior may depend on the input while the expected cost-to-go~\eqref{eq:expected_future_cost} does not. In that case, the CE input still
minimizes the Bellman objective~\eqref{eq:Q}. This motivates the
Bellman-side NDE notion introduced next.

\subsection{Dual-Effect Function}
\label{subsec:dual_effect_function}
% Before introducing Bellman-side NDE, 

We first introduce the dual-effect function, the central object on which Bellman-side NDE is defined.

\begin{definition}[Dual-effect function]\label{def:delta_J}
Let Assumption~\ref{ass:well_defined} hold.
For $x\in X$ and $\eta\in[0,P)$, the dual-effect function of a generic cost-to-go $J$ is defined as:
\begin{equation}
        \Delta_J(x,\eta)
    =
    E\!\left\{
        J\bigl(m+\sqrt{\eta}w,P-\eta\bigr)
    \right\}
    -
    J(m,P),\label{eq:delta_J}
\end{equation}
% where this expectation is w.r.t. $w$, defined in~\eqref{eq:w}.
where the expectation is taken w.r.t. $w\sim N(0,1)$, as given in Lemma~\ref{lem:belief_state_transition_eta}.
\end{definition}

This function measures the change in the expected cost-to-go~\eqref{eq:expected_future_cost} when the input induces a variance
reduction $\eta$.
% Recall that the random variable $w$, conditional on $(x,u)$, is standard Gaussian; see Lemma~\ref{lem:belief_state_transition_eta}.
% The function measures the change in the expected cost-to-go when the posterior variance \(P\) reduces to \(P-\eta\).
% Variance reduction implies that the information contained in the belief state increases.
We view the variance reduction as an information gain, so the sign of $\Delta_J(x,\eta)$ carries the following value-of-information (VOI) interpretation:
\begin{enumerate}
\item[(a)] If $\Delta_J(x,\eta)=0$, the variance reduction leaves the expected
cost-to-go unchanged, corresponding to information neutrality.
\item[(b)] If $\Delta_J(x,\eta)<0$, the variance reduction lowers the expected
cost-to-go, so information is beneficial, corresponding to positive VOI.
\item[(c)] If $\Delta_J(x,\eta)>0$, the variance reduction raises the expected
cost-to-go, so information is unfavorable, corresponding to negative VOI.
\end{enumerate}

With $\eta=\eta(P,u)$ as in~\eqref{eq:eta}, 
the dependence of the expected cost-to-go~\eqref{eq:expected_future_cost} on $u$ enters only through $\Delta_J(x,\eta)$, as stated next.

\begin{lemma}[Bellman-objective decomposition]
\label{lem:bellman_decomposition}
Let Assumptions~\ref{ass:ce_centered_cost} and~\ref{ass:well_defined} hold. Then for every $x\in X$ and
$u\in U$,
\begin{equation}\label{eq:exp_cost_in_DEF}
	%    E\!\left\{J\bigl(f(x,u,w)\bigr)\right\}
	E\big\{J\big(m+\sqrt{\eta(P,u)}w,P-\eta(P,u)\big)\big\}
	=J(x)+\Delta_J\bigl(x,\eta(P,u)\bigr).
\end{equation}
Furthermore, the Bellman objective~\eqref{eq:Q} admits the decomposition
\begin{align}
    Q_J(x,u)
%    &=g(x,u)+E\!\left\{J\bigl(f(x,u,w)\bigr)\right\}
%    \nonumber\\
    &=\ell\bigl(x,u-\nu(x)\bigr)+\psi(x)+J(x)
      +\Delta_J\bigl(x,\eta(P,u)\bigr).
    \label{eq:bellman_decomposition}
\end{align}
% and
% \begin{equation}
%         \inf_{u\in U}Q_J(x,u)
%     =\inf_{u\in U}\left[\ell\bigl(x,u-\nu(x)\bigr)
%        +\Delta_J\bigl(x,\eta(P,u)\bigr)\right].
%     \label{eq:new_Q}
% \end{equation}
\end{lemma}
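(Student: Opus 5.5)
The identity \eqref{eq:exp_cost_in_DEF} follows directly from Definition~\ref{def:delta_J}, evaluated at the particular variance reduction $\eta=\eta(P,u)$. The decomposition \eqref{eq:bellman_decomposition} then follows by substituting this identity and Assumption~\ref{ass:ce_centered_cost} into \eqref{eq:Q}.

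\emph{First identity.} Fix $x=(m,P)\in X$ and $u\in U$, and set $\eta=\eta(P,u)$ as in \eqref{eq:eta}. As noted after Lemma~\ref{lem:belief_state_transition_eta}, $u\mapsto\eta(P,u)$ maps $\Re$ into $[0,P)$. Hence $\eta$ lies in the domain on which $\Delta_J(x,\cdot)$ is defined. By Lemma~\ref{lem:belief_state_transition_eta}, conditional on $(x,u)$ the innovation $w$ is $N(0,1)$, so the expectation in \eqref{eq:expected_future_cost} is an expectation over a standard Gaussian. This is the same expectation that appears in \eqref{eq:delta_J}.

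Assumption~\ref{ass:well_defined} guarantees that $E\{|J(f(x,u,w))|\}<\infty$, so the expectation is finite. Because $J(m,P)$ is a finite deterministic number, we may add and subtract it:
\[
E\big\{J\big(m+\sqrt{\eta}w,P-\eta\big)\big\}=J(x)+\Big[E\big\{J\big(m+\sqrt{\eta}w,P-\eta\big)\big\}-J(m,P)\Big].
\]
The bracket is exactly $\Delta_J(x,\eta(P,u))$, which gives \eqref{eq:exp_cost_in_DEF}.

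\emph{Second identity.} Insert \eqref{eq:exp_cost_in_DEF} into the definition \eqref{eq:Q} of $Q_J$. Then replace $g(x,u)$ by $\ell(x,u-\nu(x))+\psi(x)$ using \eqref{eq:cost} from Assumption~\ref{ass:ce_centered_cost}. This yields \eqref{eq:bellman_decomposition}.

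\emph{Main obstacle.} The only point requiring care is the first step: checking that the expectation over $w$ in \eqref{eq:expected_future_cost} really is an expectation w.r.t.\ $N(0,1)$ and is finite. Without this, the subtraction defining $\Delta_J$ would not be legitimate. Both facts are already supplied, by Lemma~\ref{lem:belief_state_transition_eta} and Assumption~\ref{ass:well_defined} respectively, so no new work is needed there.
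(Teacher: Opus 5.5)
Your proposal is correct and follows the same route as the paper's proof: identity \eqref{eq:exp_cost_in_DEF} is Definition~\ref{def:delta_J} evaluated at $\eta=\eta(P,u)$, and \eqref{eq:bellman_decomposition} follows by substituting it and \eqref{eq:cost} into \eqref{eq:Q}. Your explicit checks that $\eta(P,u)\in[0,P)$, that $w\sim N(0,1)$ by Lemma~\ref{lem:belief_state_transition_eta}, and that the expectation is finite by Assumption~\ref{ass:well_defined} only spell out what the paper leaves implicit.
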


\begin{proof}
%Writing $\eta=\eta(P,u)$, the belief-state
%transition~\eqref{eq:dynamics_f_def} gives
%\[
%    E\!\left\{J\bigl(f(x,u,w)\bigr)\right\}
%    =E\!\left\{J\bigl(m+\sqrt{\eta}\,w,\,P-\eta\bigr)\right\}
%    =J(x)+\Delta_J\bigl(x,\eta\bigr),
%\]
%where the second 
Writing $\eta = \eta(P,u)$,~\eqref{eq:exp_cost_in_DEF} follows from the definition of the dual-effect function in Definition~\ref{def:delta_J}. Substituting this identity together with the
CE-optimal stage cost~\eqref{eq:cost} into~\eqref{eq:Q} yields the
decomposition~\eqref{eq:bellman_decomposition}.
% The infimum then follows since the $u$-independent terms $\psi(x)$ and $J(x)$ do not affect the minimization over $u$.
\end{proof}

\subsection{Bellman-side DE and NDE}
\label{subsec:BNDE}
With the dual-effect function $\Delta_J(x,\eta)$ in~\eqref{eq:delta_J}, we now define Bellman-side NDE, together with its complement, Bellman-side dual-effect (DE).

\begin{definition}[Bellman-side NDE and DE]
\label{def:bellman_de_nde}
Let Assumption~\ref{ass:well_defined} hold, and fix a belief state $x\in X$.
The function $J$ is said to satisfy \emph{Bellman-side NDE at $x$} if
\[
    \Delta_J(x,\eta)=0,\qquad \forall\,\eta\in[0,P).
\]
If instead $\Delta_J(x,\eta)\neq0$ for some $\eta\in[0,P)$, then $J$ has a \emph{Bellman-side DE at $x$}. For a set $D\subseteq X$, $J$ is said to satisfy
Bellman-side NDE on $D$ if it does so at every $x\in D$.
% ; the case $D=X$ gives Bellman-side NDE on $X$.
\end{definition}

A function $J$ may satisfy Bellman-side NDE on part of the belief-state space $X$
and exhibit a Bellman-side DE on the rest, as the following example shows.

\begin{example}[Bellman-side NDE on a subregion]
\label{ex:threshold_statewise_dual_free}
Consider the threshold-type cost-to-go function
\[
    J(m,P)=\max\{P-\gamma,0\},%\qquad \gamma>0.
\]
for some $\gamma>0$.
Since $J$ depends only on the posterior variance, its dual-effect function is
\[
    \Delta_{J}(x,\eta)
    =\max\{P-\eta-\gamma,0\}-\max\{P-\gamma,0\},
    \quad \eta\in[0,P).
\]

If $P\leq\gamma$, then $P-\eta-\gamma\le P-\gamma\leq0$ for all $\eta\in[0,P)$, so
both terms vanish and
\[
    \Delta_{J}(x,\eta)=0,\qquad \forall\,\eta\in[0,P).
\]
By Definition~\ref{def:bellman_de_nde}, $J$ satisfies Bellman-side NDE at every
state with $P\leq\gamma$.

If instead $P>\gamma$, then $\max\{P-\gamma,0\}=P-\gamma$ and
\[
    \Delta_{J}(x,\eta)=\max\{P-\eta-\gamma,0\}-(P-\gamma)<0,
    \qquad \forall\,\eta\in(0,P),
\]
so $J$ exhibits a Bellman-side DE at every state with $P>\gamma$.

In summary, $J$ satisfies Bellman-side NDE on the region
\[
    X_\gamma=\bigl\{x\in X \mid P\leq\gamma\bigr\},
\]
but not on the whole belief-state space $X$.
\end{example}

\subsection{Sufficient Condition for CE Optimality}
\label{subsec:CE_sufficient}

We now return to the CE-optimality problem posed in
Section~\ref{subsec:research_problem}. The notion of Bellman-side NDE allows us to answer the problem directly.

\begin{theorem}[Bellman-side NDE implies CE optimality]
\label{thm:CE_opt_value_side}
Suppose Assumptions~\ref{ass:ce_centered_cost} and~\ref{ass:well_defined} hold and fix $x\in X$. If $J$
satisfies Bellman-side NDE at $x\in X$, then the Bellman objective $Q_J(x,\cdot)$
in~\eqref{eq:Q} is uniquely minimized by the CE input $u=\nu(x)$, defined in~\eqref{eq:ce_input}. Furthermore,
if $J$ satisfies Bellman-side NDE on $X$, then the CE input uniquely minimizes
the Bellman objective at every $x\in X$.
\end{theorem}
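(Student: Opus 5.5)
The plan is to work directly from the Bellman-objective decomposition in Lemma~\ref{lem:bellman_decomposition} and show that, under Bellman-side NDE at $x$, every $u$-dependent term except the stage-cost deviation vanishes. By Definition~\ref{def:bellman_de_nde}, Bellman-side NDE at $x$ means $\Delta_J(x,\eta)=0$ for all $\eta\in[0,P)$. The first step is to confirm that the decomposition~\eqref{eq:bellman_decomposition} may be evaluated at every input. For a fixed $P>0$, the map $u\mapsto\eta(P,u)$ in~\eqref{eq:eta} takes values in $[0,P)$ for every $u\in U$, since $\eta(P,u)=P-P/(1+Pu^2)\in[0,P)$. Hence $\Delta_J\bigl(x,\eta(P,u)\bigr)=0$ for every $u\in U$. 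Assumption~\ref{ass:well_defined} guarantees that the expectations involved are finite, so the decomposition is legitimate.

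Substituting this into~\eqref{eq:bellman_decomposition} gives, for all $u\in U$,
\[
Q_J(x,u)=\ell\bigl(x,u-\nu(x)\bigr)+\psi(x)+J(x).
\]
The last two terms do not depend on $u$. Minimizing $Q_J(x,\cdot)$ over $U$ is therefore equivalent to minimizing $u\mapsto\ell\bigl(x,u-\nu(x)\bigr)$.

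By Assumption~\ref{ass:ce_centered_cost}, $r\mapsto\ell(x,r)$ has a unique minimizer at $r=0$. Since $u\mapsto u-\nu(x)$ is a bijection of $U=\Re$ onto itself, $u\mapsto\ell\bigl(x,u-\nu(x)\bigr)$ is uniquely minimized at $u=\nu(x)$. Explicitly, for $u\neq\nu(x)$ we have $\ell\bigl(x,u-\nu(x)\bigr)>\ell(x,0)$, hence $Q_J(x,u)>Q_J\bigl(x,\nu(x)\bigr)$. This proves the pointwise claim. The global claim then follows by applying the pointwise argument at each $x\in X$, since NDE on $X$ means NDE at every $x$.

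There is no real obstacle here. The only points needing care are that $\eta(P,u)$ never leaves the interval $[0,P)$ on which $\Delta_J$ is defined and assumed to vanish, and that uniqueness of the minimizer survives adding a constant and shifting the argument. Both were checked explicitly above.
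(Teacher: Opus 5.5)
Your proof is correct and follows essentially the same route as the paper: you substitute $\Delta_J(x,\eta(P,u))=0$ into the decomposition of Lemma~\ref{lem:bellman_decomposition}, drop the $u$-independent terms $\psi(x)+J(x)$, and invoke the unique minimizer of $\ell(x,\cdot)$ at zero from Assumption~\ref{ass:ce_centered_cost}. Your explicit checks that $\eta(P,u)\in[0,P)$ for every $u$ and that the shift $u\mapsto u-\nu(x)$ preserves uniqueness are steps the paper leaves implicit.
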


\begin{proof}
Fix $x\in X$. Since $J$ satisfies Bellman-side NDE at $x$,
Definition~\ref{def:bellman_de_nde} gives $\Delta_J(x,\eta)=0$ for all
$\eta\in[0,P)$. 
% As $\eta(P,u)\in[0,P)$ for every $u\in U$, it follows that
% $\Delta_J\bigl(x,\eta(P,u)\bigr)=0$ for all $u\in U$. 
Hence the
decomposition~\eqref{eq:bellman_decomposition} in Lemma~\ref{lem:bellman_decomposition} reduces
to
\[
    Q_J(x,u)=\ell\bigl(x,u-\nu(x)\bigr)+\psi(x)+J(x),
    \qquad \forall u\in U.
\]
Since the last two terms are independent of $u$,
\begin{equation}
        \argmin_{u\in U}Q_J(x,u)=\argmin_{u\in U}\ell\bigl(x,u-\nu(x)\bigr).
\end{equation}
By Assumption~\ref{ass:ce_centered_cost}, $\ell(x,\cdot)$ is uniquely minimized
at zero, so $Q_J(x,\cdot)$ is uniquely minimized by the CE input $u=\nu(x)$.
\end{proof}

\begin{remark}
Bellman-side NDE is sufficient but not necessary for CE optimality: the CE input
may minimize $Q_J(x,\cdot)$ even when $\Delta_J(x,\cdot)\neq 0$. This occurs,
for instance, when the expected cost-to-go~\eqref{eq:expected_future_cost} varies with $u$, but is itself
minimized at $u=\nu(x)$, so that CE input is optimal for both terms of $Q_J$.
A simple example is as follows. At a state \(x\in X\), suppose that the CE input satisfies \(\nu(x)=0\).
Let \(     J(m,P)=-P\). Then the expected cost-to-go is
\[
    E\big\{J(f(x,u,w))\big\}
    =
    -\frac{P}{1+Pu^2},
\]
which depends on \(u\), so Bellman-side
NDE fails. However, this term is minimized at \(u=0=\nu(x)\). Since \(g(x,u)\) is also
minimized at \(u=\nu(x)\), the CE input remains a minimizer of the Bellman
objective.
\end{remark}

The dual-effect function $\Delta_J(x,\eta)$ characterizes Bellman-side NDE directly by definition. However, verifying this condition is inconvenient, as it requires evaluating an expectation for every variance reduction $\eta\in[0,P)$. The next section derives an equivalent and more tractable criterion.

\section{Characterization of Bellman-side NDE}
\label{sec:source}

This section develops a characterization of the dual-effect function, Bellman-side DE and NDE, introduced in the previous section.
In Section~\ref{subsec:source_def_regu_ass}, we first state the regularity conditions required for the subsequent analysis. 
In Section~\ref{subsec:from_source_to_de}, we then introduce
the \emph{source} associated with a generic cost-to-go function
and derive a representation of the dual-effect function
in terms of this source.
Using this representation, we characterize VOI with the source in Section~\ref{subsec:voi_source}. 
In Section~\ref{subsec:source_NDE}, we show that Bellman-side NDE is equivalent to the source vanishing identically, and use this to characterize CE-optimality.

\subsection{Regularity Conditions}
\label{subsec:source_def_regu_ass}

We first impose the following regularity conditions.
\begin{assumption}[Regularity conditions]
\label{ass:regularity}
The function $J: X \to \Re$ 
satisfies:
\begin{enumerate}
    \item[(a)] 
$J$ has continuous partial derivatives $\partial_P J$, $\partial_m J$, and $\partial_{mm} J$ on $X$, where
$\partial_P J$ denotes the first-order partial derivative w.r.t. $P$, and
$\partial_m J$ and $\partial_{mm} J$ denote the first- and second-order partial derivatives with
respect to $m$, respectively.
    
    \item[(b)] (Locally uniform polynomial growth.) 
    For every compact interval $\mathcal I \subset (0, \infty)$, 
    there exist constants $c_{\mathcal I} > 0$ and 
    $q_{\mathcal I} \geq 0$ such that, for all $m \in \Re$ 
    and all $P \in \mathcal I$,
    \[
        % \max\bigl\{|J(m, P)|, |\partial_m J(m, P)|, 
        % |\partial_{mm} J(m, P)|, |\partial_P J(m, P)|\bigr\}
        % \leq c_{\mathcal I}\bigl(1 + |m|^{q_{\mathcal I}}\bigr),
        \max\bigl\{|J|, |\partial_m J|, 
        |\partial_{mm} J|, |\partial_P J|\bigr\}(m, P)
        \!\leq\! c_{\mathcal I}\bigl(1 + |m|^{q_{\mathcal I}}\bigr).    \]
     % where \(\partial_m J\) denotes the first-order partial derivative of \(J\) w.r.t. \(m\).
\end{enumerate}
\end{assumption}

% \begin{remark}[What the source measures]
% \label{rem:what_source_measures}
% The source \(\mathcal S_J\) compares two effects generated by Bayesian update. When the control $u$ induces a variance reduction \(\eta\), the next belief state is $(m+\sqrt{\eta}\,w, P-\eta)$.
% It implies that information gain does not merely reduce posterior variance; it injects additional dispersion into the posterior mean. 
% Thus, a balance exists between $\partial_P J$, which measures the value of reducing $P$, and $\partial_{mm} J$, which captures the effect of the induced random dispersion of $m$.  
% % A zero source means that the two effects exactly offset each other.
% \end{remark}

\begin{remark}
%	Assumption~\ref{ass:regularity} is required only for the source characterization; 
	Recall that Bellman-side NDE in Definition~\ref{def:bellman_de_nde} needs only Assumption~\ref{ass:well_defined}.
	The threshold cost-to-go of Example~\ref{ex:threshold_statewise_dual_free}, for instance, is Bellman-side NDE on $X_\gamma$ but fails Assumption~\ref{ass:regularity}(a) at $P=\gamma$.
	Thus, the results of this section do not apply to Example~\ref{ex:threshold_statewise_dual_free}.
\end{remark}

We next state several consequences of Assumption~\ref{ass:regularity} that will
be used in the subsequent analysis.

\begin{lemma}[Consequences of regularity conditions]
\label{lem:regularity_consequences}
Suppose Assumption~\ref{ass:regularity} holds. Fix $x=(m,P)\in X$ and
$\bar\eta\in(0,P)$. Then there exists a continuous function
$G:\Re\to[0,\infty)$ of polynomial growth such that
\[
    E\{|w|^kG(w)\}<\infty,
    \qquad \forall\,k\geq0,
    \qquad w\sim N(0,1),
\]
and for all $\eta\in[0,\bar\eta]$ and $w\in\Re$, 
\begin{equation}
    \max\bigl\{|J|,|\partial_m J|,|\partial_{mm} J|,|\partial_P J|\bigr\}
    (m+\sqrt{\eta}\,w,P-\eta)
    \leq G(w).
    \label{eq:domination}
\end{equation}
Define
\[
    F_x(\eta)
    =
    E\{J(m+\sqrt{\eta}\,w,P-\eta)\},
    \qquad
    \eta\in[0,\bar\eta].
\]
This uniform domination further yields the following consequences:
\begin{enumerate}
    \item[(a)] The expected cost-to-go is well defined for every
    $\eta\in[0,\bar\eta]$, since
    \[
        E\{|J(m+\sqrt{\eta}\,w,P-\eta)|\}<\infty,
        \qquad
        \forall\,\eta\in[0,\bar\eta].
    \]
    Since $x$ and $\bar\eta$ are arbitrary, this also implies
    Assumption~\ref{ass:well_defined}.

    \item[(b)] The mapping
    \(
        \eta\mapsto F_x(\eta)
    \)
    is differentiable on $(0,\bar\eta)$, with
    % \begin{equation}
    %     % \frac{\partial}{\partial\eta}F_x(\eta)
    %     F_x'(\eta)
    %     =
    %     E\left\{
    %         \frac{w}{2\sqrt{\eta}}
    %         \partial_m J(m+\sqrt{\eta}\,w,P-\eta)
    %         -
    %         \partial_P J(m+\sqrt{\eta}\,w,P-\eta)
    %     \right\},
    %     \label{eq:diff_under_int}
    % \end{equation}
    \begin{equation}
    \begin{aligned}
        F_x'(\eta)
        &=
        E\Biggl\{
            \frac{w}{2\sqrt{\eta}}
            \partial_m J(m+\sqrt{\eta}\,w,P-\eta) \\
        &\qquad {}-
            \partial_P J(m+\sqrt{\eta}\,w,P-\eta)
        \Biggr\},
    \end{aligned}
    \label{eq:diff_under_int}
\end{equation}
    where \(F_x'\) denotes the derivative of \(F_x\) w.r.t. \(\eta\). 

    \item[(c)] For every $\eta\in(0,\bar\eta]$, Stein's identity
    \cite[Sec.~14.2]{chen2011normal} gives
    % \begin{equation}
    %     E\{w\partial_m J(m+\sqrt{\eta}\,w,P-\eta)\}
    %     =
    %     \sqrt{\eta}\,
    %     E\{\partial_{mm} J(m+\sqrt{\eta}\,w,P-\eta)\}.
    %     \label{eq:stein_id}
    % \end{equation}
    \begin{equation}
\label{eq:stein_id}
\begin{aligned}
    &E\{w\partial_m J(m+\sqrt{\eta}\,w,P-\eta)\} \\
    &\quad =
    \sqrt{\eta}\,
    E\{\partial_{mm} J(m+\sqrt{\eta}\,w,P-\eta)\}.
\end{aligned}
\end{equation}

    \item[(d)] The function $F_x$ is continuous on $[0,\bar\eta]$, with
    \(
        F_x(0)=J(m,P).
    \)

    \item[(e)] For every $\eta\in(0,\bar\eta)$,
    \[
        |F_x'(\eta)|
        \leq
        \frac{3}{2}E\{G(w)\}.
    \]
    Hence $F_x$ is Lipschitz continuous, and then absolutely continuous,
    on $[0,\bar\eta]$.
\end{enumerate}
\end{lemma}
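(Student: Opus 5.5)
The plan is to build the dominating function $G$ from Assumption~\ref{ass:regularity}(b) on a single compact $P$-interval, and then derive (a)--(e) by dominated convergence, the Leibniz rule and Stein's identity. Fix $x=(m,P)$ and $\bar\eta\in(0,P)$. For $\eta\in[0,\bar\eta]$ the second argument $P-\eta$ lies in the compact interval $\mathcal I=[P-\bar\eta,P]\subset(0,\infty)$. Assumption~\ref{ass:regularity}(b) then gives constants $c_{\mathcal I},q_{\mathcal I}$ such that all four quantities at $(m+\sqrt\eta\,w,P-\eta)$ are bounded by $c_{\mathcal I}(1+|m+\sqrt\eta\,w|^{q_{\mathcal I}})$. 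Since $\sqrt\eta\le\sqrt{\bar\eta}$, we can set
\[
G(w)=c_{\mathcal I}\bigl(1+(|m|+\sqrt{\bar\eta}\,|w|)^{q_{\mathcal I}}\bigr).
\]
This $G$ is continuous, has polynomial growth, and does not depend on $\eta$, so \eqref{eq:domination} holds. Because Gaussian moments of every order are finite, $E\{|w|^kG(w)\}<\infty$ for all $k$.

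Part (a) is immediate from $|J|\le G$ and $E\{G(w)\}<\infty$. Since every $x$ and every $u$ (through $\eta(P,u)<P$, choosing $\bar\eta\ge\eta(P,u)$) are covered, Assumption~\ref{ass:well_defined} follows. For (d), $J$ is continuous, so the integrand is continuous in $\eta$ for each fixed $w$; dominated convergence with the bound $G$ gives continuity of $F_x$ on $[0,\bar\eta]$, and at $\eta=0$ the integrand equals $J(m,P)$ identically.

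For (b), I will differentiate under the integral on compact subintervals $[\eta_1,\eta_2]\subset(0,\bar\eta)$. By the chain rule, the $\eta$-derivative of the integrand is $\frac{w}{2\sqrt\eta}\partial_mJ-\partial_PJ$, evaluated along the path, and it is continuous in $\eta$ by Assumption~\ref{ass:regularity}(a). On $[\eta_1,\eta_2]$ it is bounded by $\bigl(\frac{|w|}{2\sqrt{\eta_1}}+1\bigr)G(w)$, which is integrable. The standard Leibniz/mean-value theorem criterion then gives \eqref{eq:diff_under_int}. This is the main delicate point: the factor $1/\sqrt\eta$ blows up at $0$, so the domination only works away from $\eta=0$. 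This is why (b) is stated on the open interval, and why Stein's identity is needed to remove the singularity.

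For (c), apply Stein's identity $E\{wh(w)\}=E\{h'(w)\}$ to $h(w)=\partial_mJ(m+\sqrt\eta\,w,P-\eta)$. This $h$ is $C^1$ with $h'(w)=\sqrt\eta\,\partial_{mm}J(\cdot)$, and both $h$ and $h'$ are dominated by $G$, which has polynomial growth. That growth justifies the integration by parts: the boundary terms $h(w)\phi(w)$ vanish as $|w|\to\infty$, and all the integrals are finite. The result is \eqref{eq:stein_id}.

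For (e), substitute (c) into (b) to get
\[
F_x'(\eta)=E\bigl\{\tfrac12\partial_{mm}J-\partial_PJ\bigr\},
\]
so $|F_x'(\eta)|\le\frac32E\{G(w)\}$ uniformly on $(0,\bar\eta)$. Together with the continuity from (d), the mean value theorem yields Lipschitz continuity on $[0,\bar\eta]$, and Lipschitz continuity implies absolute continuity.
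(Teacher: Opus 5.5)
Your proposal is correct and follows essentially the same route as the paper: the same compact interval $\mathcal I=[P-\bar\eta,P]$ and dominating function $G(w)=c_{\mathcal I}\bigl(1+(|m|+\sqrt{\bar\eta}\,|w|)^{q_{\mathcal I}}\bigr)$, differentiation under the integral on compact subintervals $[\eta_1,\eta_2]\subset(0,\bar\eta)$ with the bound $\bigl(1+\frac{|w|}{2\sqrt{\eta_1}}\bigr)G(w)$, dominated convergence for (d), and the mean value theorem extended by continuity for (e). The only cosmetic difference is in (c): you justify Stein's identity by integration by parts with vanishing boundary terms, whereas the paper just checks the integrability conditions $E\{|wB(w)|\}<\infty$ and $E\{|B'(w)|\}<\infty$ and cites the identity.
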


\begin{proof}
See Appendix~\ref{app:regularity_proof}.
\end{proof}

\subsection{Source Representation of Dual-Effect Function}
\label{subsec:from_source_to_de}
With the regularity conditions in Assumption~\ref{ass:regularity}, we define the \emph{source}\footnote{The terminology is from the heat equation: $\mathcal S_J$
is the source term in the inhomogeneous equation
$\partial_P J-\frac{1}{2}\partial_{mm} J=\mathcal S_J$.} $\mathcal S_J:X\to\Re$ by
\begin{equation}\label{eq:source_eq}
        \mathcal S_J(m,P)
    =
    \partial_P J(m,P)-\frac12 \partial_{mm} J(m,P).
\end{equation}
We state that the source measures the local sensitivity of the expected cost-to-go~\eqref{eq:expected_future_cost} to variance reduction $\eta$ in the following theorem.

\begin{theorem}[Source representation of dual-effect function]
\label{thm:exact_source_representation}
Suppose Assumption~\ref{ass:regularity} holds. 
Then, for every \(x\in X\)  and every \(\eta\in(0,P)\),
\[ 
    \frac{\partial}{\partial \eta}
    E\!\left\{
        J(m+\sqrt{\eta}w,P-\eta)
    \right\}
    =
    -
    E\!\left\{ 
        \mathcal S_J(m+\sqrt{\eta}w,P-\eta)
    \right\}.
\]
Furthermore, for every \(\eta\in[0,P)\),
\begin{equation}
\begin{aligned}
        \Delta_J(x,\eta)&=\label{eq:source_rep}
        E\!\left\{
        J(m+\sqrt{\eta}w,P-\eta)
    \right\}
    -
    J(m,P)\\
    &=
    -
    \int_0^\eta
    E\!\left\{
        \mathcal S_J(m+\sqrt{s}w,P-s)
    \right\}ds.
\end{aligned}
\end{equation}
\end{theorem}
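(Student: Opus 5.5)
The plan is to derive both claims from Lemma~\ref{lem:regularity_consequences}, applied at a fixed state. Fix $x=(m,P)\in X$ and $\eta\in(0,P)$, and choose some $\bar\eta\in(\eta,P)$. Lemma~\ref{lem:regularity_consequences} then applies on $[0,\bar\eta]$, with $F_x(\eta)=E\{J(m+\sqrt{\eta}w,P-\eta)\}$.

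The derivative formula comes first. By Lemma~\ref{lem:regularity_consequences}(b), $F_x$ is differentiable at the interior point $\eta$, and its derivative is given by \eqref{eq:diff_under_int}. By Stein's identity \eqref{eq:stein_id} in part (c),
\[
E\Bigl\{\tfrac{w}{2\sqrt{\eta}}\,\partial_m J(m+\sqrt{\eta}w,P-\eta)\Bigr\}
=\tfrac12 E\{\partial_{mm}J(m+\sqrt{\eta}w,P-\eta)\}.
\]
Both expectations in \eqref{eq:diff_under_int} are finite, since the integrands are dominated by $G(w)$ and $|w|G(w)/(2\sqrt\eta)$. We may therefore split the expectation and recombine it as
\[
F_x'(\eta)=-E\bigl\{\partial_P J-\tfrac12\partial_{mm}J\bigr\}(m+\sqrt{\eta}w,P-\eta)=-E\{\mathcal S_J(m+\sqrt{\eta}w,P-\eta)\}.
\]
This is the first claim. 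Since $\eta\in(0,P)$ was arbitrary, the identity holds on all of $(0,P)$.

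For the integral representation, fix $\eta\in[0,P)$. The case $\eta=0$ is trivial, because both sides vanish: $\Delta_J(x,0)=0$ by Lemma~\ref{lem:regularity_consequences}(d). For $\eta>0$, choose $\bar\eta\in(\eta,P)$ again. By part (e), $F_x$ is absolutely continuous on $[0,\bar\eta]$, so the fundamental theorem of calculus for absolutely continuous functions gives $F_x(\eta)-F_x(0)=\int_0^\eta F_x'(s)\,ds$. We then substitute the derivative formula for $s\in(0,\eta)$; the endpoint $s=0$ is a null set. Using $F_x(0)=J(m,P)$ from part (d) together with Definition~\ref{def:delta_J}, this yields \eqref{eq:source_rep}.

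The main obstacle is the differentiation under the expectation near $\eta\downarrow0$, where the factor $w/(2\sqrt\eta)$ blows up. Lemma~\ref{lem:regularity_consequences} already handles this, and Stein's identity removes the singularity, which makes the integrand $F_x'(s)$ bounded by part (e). What remains is a bookkeeping point: part (e) guarantees that the integral in \eqref{eq:source_rep} converges as a Lebesgue integral even though the representation of $F_x'$ in \eqref{eq:diff_under_int} is only valid for $s>0$.
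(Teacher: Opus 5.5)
Your proposal is correct and follows essentially the same route as the paper. You differentiate $F_x$ via Lemma~\ref{lem:regularity_consequences}(b), remove the $w/(2\sqrt{\eta})$ singularity with Stein's identity from part (c), and recover the integral representation from the absolute continuity and $F_x(0)=J(m,P)$ in parts (d)--(e). Your added bookkeeping (justifying the split of the expectation and handling $\eta=0$ separately) only makes explicit what the paper's argument leaves implicit.
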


\begin{proof}
Fix $\eta\in(0,P)$ and choose $\bar\eta\in(\eta,P)$. By
Lemma~\ref{lem:regularity_consequences}(b)--(c), for every
$s\in(0,\bar\eta)$,
% \begin{align}
%     F_x'(s)
%     &=
%     E\left\{
%         \frac{w}{2\sqrt{s}}
%         \partial_m J(m+\sqrt{s}\,w,P-s)
%         -
%         \partial_P J(m+\sqrt{s}\,w,P-s)
%     \right\}
%     \label{eq:source_derivative_1}\\
%     &=
%     E\left\{
%         \frac{1}{2}\partial_{mm} J(m+\sqrt{s}\,w,P-s)
%         -
%         \partial_P J(m+\sqrt{s}\,w,P-s)
%     \right\}
%     \label{eq:source_derivative_2}\\
%     &=
%     -E\left\{
%         \mathcal S_J(m+\sqrt{s}\,w,P-s)
%     \right\},
%     \label{eq:source_derivative_3}
% \end{align}
\begin{align}
    F_x'(s)
    &= E\Bigl\{
        \frac{w}{2\sqrt{s}}
        \partial_m J(m+\sqrt{s}\,w,P-s)
    \notag\\
    &\qquad {}-
        \partial_P J(m+\sqrt{s}\,w,P-s)
    \Bigr\}
    \label{eq:source_derivative_1}\\
    &= E\Bigl\{
        \frac{1}{2}
        \partial_{mm} J(m+\sqrt{s}\,w,P-s)
    \notag\\
    &\qquad {}-
        \partial_P J(m+\sqrt{s}\,w,P-s)
    \Bigr\}
    \label{eq:source_derivative_2}\\
    &= -E\Bigl\{
        \mathcal S_J(m+\sqrt{s}\,w,P-s)
    \Bigr\},
    \label{eq:source_derivative_3}
\end{align}
where the first equality follows from~\eqref{eq:diff_under_int}, the second
from Stein's identity~\eqref{eq:stein_id}, and the third from the definition
of the source $\mathcal S_J$.

By Lemma~\ref{lem:regularity_consequences}(d)--(e), $F_x$ is absolutely
continuous on $[0,\bar\eta]$ and satisfies
\(
    F_x(0)=J(m,P)
\).
Hence, the fundamental theorem of calculus gives
\[
\begin{aligned}
    \Delta_J(x,\eta)
    &=
    F_x(\eta)-F_x(0)=
    \int_0^\eta F_x'(s)\,ds\\
    &=
    -\int_0^\eta
    E\left\{
        \mathcal S_J(m+\sqrt{s}\,w,P-s)
    \right\}\,ds,
\end{aligned}
\]
which proves~\eqref{eq:source_rep}.
\end{proof}

This representation replaces the direct evaluation of
$\Delta_J(x,\eta)$ for every $\eta\in[0,P)$ by the analysis of the local
differential quantity $\mathcal S_J$. 
The subsequent sections exploit this
result in two ways. Section~\ref{subsec:voi_source} relates the local and global signs of $\Delta_J(x,\eta)$ to corresponding sign conditions on
$\mathcal{S}_J$.
Section~\ref{subsec:source_NDE} characterizes Bellman-side NDE by the zero-source condition $\mathcal S_J\equiv0$.

\subsection{Source Characterization of Value of Information}
\label{subsec:voi_source}

Theorem~\ref{thm:exact_source_representation} identifies the source $\mathcal S_J$
as the quantity that governs $\Delta_J$, i.e., how a variance reduction changes the expected cost-to-go~\eqref{eq:expected_future_cost}. Considering a variance reduction as an information gain, the source thus determines the VOI. We first describe the VOI at a single belief state, then extend it to the whole state space.

\begin{lemma}[Source and marginal VOI]
\label{lem:local_voi}
Suppose Assumption~\ref{ass:regularity} holds and fix $x\in X$. Then
\begin{equation}\label{eq:local_expansion}
    \Delta_J(x,\eta)=-\eta\,\mathcal S_J(m,P)+o(\eta),
    \qquad \eta\downarrow0,
\end{equation}
and the sign of $\mathcal S_J(m,P)$ determines the first-order effect of a small
variance reduction on the expected cost-to-go~\eqref{eq:expected_future_cost}:
\begin{enumerate}
    \item[(a)] (Neutral to first order.) If $\mathcal S_J(m,P)=0$, then
    $\Delta_J(x,\eta)=o(\eta)$ as $\eta\downarrow0$.
    \item[(b)] (Locally positive VOI.) If $\mathcal S_J(m,P)>0$, then there
    exists $\eta_0>0$ such that $\Delta_J(x,\eta)<0$ for all $\eta\in(0,\eta_0)$.
    \item[(c)] (Locally negative VOI.) If $\mathcal S_J(m,P)<0$, then there
    exists $\eta_0>0$ such that $\Delta_J(x,\eta)>0$ for all $\eta\in(0,\eta_0)$.
\end{enumerate}
\end{lemma}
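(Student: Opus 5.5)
The plan is to start from the integral representation~\eqref{eq:source_rep} of Theorem~\ref{thm:exact_source_representation}. Define $h(s)=E\{\mathcal S_J(m+\sqrt{s}w,P-s)\}$ for $s\in[0,\bar\eta]$, with some fixed $\bar\eta\in(0,P)$. Then $\Delta_J(x,\eta)=-\int_0^\eta h(s)\,ds$. Note that $h(0)=\mathcal S_J(m,P)$, because at $s=0$ the random argument collapses to $(m,P)$. The expansion~\eqref{eq:local_expansion} is then the statement that $\frac1\eta\int_0^\eta h(s)\,ds\to h(0)$ as $\eta\downarrow0$. It therefore suffices to show that $h$ is continuous at $s=0$ from the right.

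For the continuity of $h$, I would use dominated convergence together with the domination bound~\eqref{eq:domination} of Lemma~\ref{lem:regularity_consequences}. For every fixed $w\in\Re$, the point $(m+\sqrt{s}w,P-s)$ tends to $(m,P)$ as $s\downarrow0$. By Assumption~\ref{ass:regularity}(a), $\partial_P J$ and $\partial_{mm}J$ are continuous, so $\mathcal S_J(m+\sqrt{s}w,P-s)\to\mathcal S_J(m,P)$ pointwise in $w$. Moreover, for all $s\in[0,\bar\eta]$,
\[
|\mathcal S_J(m+\sqrt{s}w,P-s)|\le \tfrac32 G(w),
\]
and $E\{G(w)\}<\infty$. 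Dominated convergence then gives $h(s)\to h(0)$.

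Write $c=\mathcal S_J(m,P)$. Given $\epsilon>0$, choose $\delta$ such that $|h(s)-c|<\epsilon$ for $s<\delta$. Then $|\Delta_J(x,\eta)+\eta c|\le\epsilon\eta$ for $\eta<\delta$, which is exactly~\eqref{eq:local_expansion}. Part~(a) follows immediately by setting $c=0$. For~(b), suppose $c>0$. Take $\epsilon=c/2$ and $\eta_0=\min\{\delta,\bar\eta\}$. Then for $\eta\in(0,\eta_0)$ we have $\Delta_J(x,\eta)\le-\eta c+\eta c/2=-\eta c/2<0$. Alternatively, $h>c/2>0$ on $[0,\eta_0)$, so the integral is directly negative. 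Part~(c) is symmetric with reversed signs.

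The only real obstacle is justifying the limit at $s=0$. The derivative formula in Theorem~\ref{thm:exact_source_representation} holds only for $\eta\in(0,P)$, and Stein's identity degenerates at $s=0$. The integral form sidesteps both issues, since it needs only the continuity of $h$ at $0$. That continuity rests entirely on the uniform domination $G$, which holds on $[0,\bar\eta]$ including the endpoint. Since $\bar\eta<P$ keeps $P-s$ in the compact interval $[P-\bar\eta,P]$, this domination is valid throughout.
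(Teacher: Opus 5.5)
Your proposal is correct and follows the paper's proof essentially step by step. Both arguments start from the integral representation of Theorem~\ref{thm:exact_source_representation}, establish right-continuity of $s\mapsto E\{\mathcal S_J(m+\sqrt{s}w,P-s)\}$ at $s=0$ by dominated convergence with the bound $\tfrac32 G(w)$ from Lemma~\ref{lem:regularity_consequences}, and then read off the expansion and the sign statements. Your explicit bound $|\Delta_J(x,\eta)+\eta c|\le\epsilon\eta$ is simply a more detailed version of the paper's substitution of $\mathcal S_J(m,P)+o(1)$ into the integral.
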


\begin{proof}
    See Appendix~\ref{app:local_voi}.
\end{proof}

By the expansion~\eqref{eq:local_expansion}, $\mathcal S_J(m,P)$ is the \emph{marginal VOI} at $x$: its sign determines whether the VOI is positive,
neutral to first order, or negative; its magnitude gives how much the expected cost-to-go~\eqref{eq:expected_future_cost} changes per unit of variance reduction.
When the source has a constant sign across the whole state space $X$, this marginal VOI extends to a global one, holding for every variance reduction, as stated next.% rather than only to first order.

\begin{corollary}[Source sign and global VOI on $X$]
\label{cor:zero_source}
Suppose Assumption~\ref{ass:regularity} holds.
\begin{enumerate}
    \item[(a)] (Information neutrality.) If $\mathcal S_J(m,P)=0$ for all
    $x\in X$, then
    \[
        \Delta_J(x,\eta)=0,\qquad \forall\,\eta\in(0,P).
    \]
    \item[(b)] (Positive VOI.) If $\mathcal S_J(m,P)>0$ for all $x\in X$,
    then
    \[
        \Delta_J(x,\eta)<0,\qquad \forall\,\eta\in(0,P).
    \]
    \item[(c)] (Negative VOI.) If $\mathcal S_J(m,P)<0$ for all $x\in X$,
    then
    \[
        \Delta_J(x,\eta)>0,\qquad \forall\,\eta\in(0,P).
    \]
\end{enumerate}
\end{corollary}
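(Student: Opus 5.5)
The plan is to apply the source representation~\eqref{eq:source_rep} of Theorem~\ref{thm:exact_source_representation} directly. Fix $x=(m,P)\in X$ and $\eta\in(0,P)$. Then
\[
    \Delta_J(x,\eta)=-\int_0^\eta h(s)\,ds,
    \qquad
    h(s)=E\bigl\{\mathcal S_J(m+\sqrt{s}\,w,P-s)\bigr\}.
\]
For each $s\in[0,\eta]$ and each $w\in\Re$, the point $(m+\sqrt{s}\,w,P-s)$ lies in $X$, because $P-s\ge P-\eta>0$. The sign hypothesis on $\mathcal S_J$ over all of $X$ therefore transfers pointwise to the integrand inside the expectation. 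This is the only place where the global hypothesis is needed; a hypothesis only at $x$ would not suffice, since the shifted states leave $x$.

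For part (a), $\mathcal S_J\equiv0$ on $X$ gives $h(s)=0$ for every $s$, so $\Delta_J(x,\eta)=0$. For part (b), $\mathcal S_J>0$ on $X$ implies that the random variable $\mathcal S_J(m+\sqrt{s}\,w,P-s)$ is strictly positive for every $w$. Its expectation is finite: by the domination~\eqref{eq:domination} of Lemma~\ref{lem:regularity_consequences} with $\bar\eta\in(\eta,P)$, we have $|\mathcal S_J|\le \tfrac32 G(w)$, which is integrable. The expectation of an integrable, strictly positive random variable is strictly positive, so $h(s)>0$ for every $s\in(0,\eta]$. Part (c) follows from (b) applied to $-J$, since $\mathcal S_{-J}=-\mathcal S_J$ and $\Delta_{-J}=-\Delta_J$.

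The main obstacle is passing from $h>0$ pointwise to $\int_0^\eta h(s)\,ds>0$. This requires $h$ to be measurable in $s$, which holds because $h=-F_x'$ a.e. and $F_x$ is absolutely continuous by Lemma~\ref{lem:regularity_consequences}(e). The Lebesgue integral of a measurable function that is strictly positive on a set of positive measure is strictly positive, and here $(0,\eta]$ has positive measure. Hence $\Delta_J(x,\eta)<0$ in case (b), and by the reduction above $\Delta_J(x,\eta)>0$ in case (c). As an alternative route one could note that $h$ is continuous, by dominated convergence using the continuity of the derivatives and the bound $G$, but the measure-theoretic argument above suffices.
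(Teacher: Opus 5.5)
Your proposal is correct and follows the paper's proof. Both fix $x$ and $\eta\in(0,P)$, invoke the source representation~\eqref{eq:source_rep}, and transfer the sign of $\mathcal S_J$ on all of $X$ to the integrand $E\{\mathcal S_J(m+\sqrt{s}\,w,P-s)\}$ and hence to $\Delta_J(x,\eta)$. Your extra checks make explicit steps the paper leaves implicit: integrability via the bound $\tfrac32 G(w)$, strict positivity of the expectation, measurability in $s$, and the reduction of (c) to (b) via $-J$. One small refinement: the identity $h=-F_x'$ in fact holds for every $s\in(0,P)$ by Theorem~\ref{thm:exact_source_representation}, not just almost everywhere.
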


\begin{proof}
Fix \(x\in X\) and \(\eta\in(0,P)\). By
Theorem~\ref{thm:exact_source_representation},
\begin{align}
    \Delta_J(x,\eta)&=E\big\{
            J\big(m+\sqrt{\eta}w,\ P-\eta\big)
        \big\}
        -
        J(m,P)\nonumber\\
    &=
    -
    \int_0^\eta
    E\big\{
        \mathcal S_J\big(m+\sqrt{s}w,\ P-s\big)
    \big\}\,ds.\nonumber
\end{align}
If \(\mathcal S_J\equiv0\) on \(X\), then the integrand is zero for all
\(s\in[0,\eta]\), and hence $\Delta_J(x,\eta)=0$. This proves (a).

If \(\mathcal S_J>0\) on \(X\), then the integrand
\[
    E\big\{
        \mathcal S_J\big(m+\sqrt{s}w,\ P-s\big)
    \big\}
    >0,
    \qquad
    0\le s\le\eta.
\]
Thus, $\Delta_J(x,\eta)<0$, and this proves (b).

If \(\mathcal S_J< 0\) on \(X\), then the integrand
\[
    E\big\{
        \mathcal S_J\big(m+\sqrt{s}w,\ P-s\big)
    \big\}
    < 0,
    \qquad
    0\le s\le\eta.
\]
Thus, $\Delta_J(x,\eta)>0$,  and this proves (c).
\end{proof}

Here we illustrate the above global VOI with the following example.

\begin{example}[Quadratic cost-to-go function]
\label{eg:quadratic_cont}
Consider the quadratic cost-to-go function
$   J(m,P)
    =
    m^2+\lambda P,
$
where $\lambda\in\Re$. Its source is
$$
\begin{aligned}
    \mathcal S_J(m,P)
    =
    \partial_P J-\frac12 \partial_{mm} J 
    =
    \lambda-1.
\end{aligned}
$$
Thus, $\lambda>1$ corresponds to a positive-VOI function
$J$ on $X$, $\lambda=1$ corresponds to an information-neutral function $J$ on $X$, and
$\lambda<1$ corresponds to a negative-VOI function $J$ on $X$.
This follows immediately from Corollary~\ref{cor:zero_source}, and we will verify the result in numerical studies in Section~\ref{sec:numerical}.
% , without computing the following Bellman objective
% \begin{equation}
%     \begin{aligned}
% \label{eq:J0}
%     % J^*(m,P)
%     % &=
%     % &\inf_{u\in\Re}
%     % \left[
%     %     g(x,u)
%     %     +
%     %     E\big\{
%     %         J\big(m+\sqrt{\eta(P,u)}w, P-\eta(P,u)\big)
%     %     \big\}
%     % \right]\\
%     %     =&
%     \inf_{u\in\Re}
%     \left[
%         g(x,u)
%         +
%         E\big\{
%             (m+\sqrt{\eta(P,u)}w)^2
%             +
%             \lambda\big(P-\eta(P,u)\big)
%         \big\}
%     \right].
% \end{aligned}
% \end{equation}
\end{example}

% Beyond the sign information, Corollary~\ref{lem:local_voi} also shows that the
% magnitude of the source determines the local marginal value of variance reduction, or equivalently, the local marginal value of information.

\subsection{Source Characterization of Bellman-side NDE and CE Optimality}
\label{subsec:source_NDE}

We now show that, for the scalar linear-Gaussian model~\eqref{eq:io},
Bellman-side NDE on $X$ is equivalent to a vanishing source on $X$, and that this
condition is sufficient for CE optimality.

\begin{theorem}[Source characterization of Bellman-side NDE]
\label{thm:zero_dual_free_eqiv}
Suppose Assumption~\ref{ass:regularity} holds. Then \(J\) is
Bellman-side NDE on $X$ if and only if
\[
    \mathcal S_J(m,P)=0,
    \qquad \forall x\in X.
\]
If this condition holds and Assumption~\ref{ass:ce_centered_cost} is satisfied, the CE input in~\eqref{eq:ce_input} uniquely minimizes the Bellman objective~\eqref{eq:Q} at every $x\in X$.
\end{theorem}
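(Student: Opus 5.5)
The plan is to prove the two directions of the equivalence separately, using Theorem~\ref{thm:exact_source_representation} and Lemma~\ref{lem:local_voi}. The final CE-optimality claim will then follow from Theorem~\ref{thm:CE_opt_value_side}.

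\emph{Sufficiency.} Suppose $\mathcal S_J\equiv0$ on $X$. Corollary~\ref{cor:zero_source}(a) gives $\Delta_J(x,\eta)=0$ for all $\eta\in(0,P)$ and every $x\in X$. The case $\eta=0$ is trivial, since $\Delta_J(x,0)=J(m,P)-J(m,P)=0$ directly from Definition~\ref{def:delta_J}. Hence $\Delta_J(x,\cdot)\equiv0$ on $[0,P)$ for every $x$. Assumption~\ref{ass:well_defined}, which Definition~\ref{def:bellman_de_nde} presupposes, is guaranteed by Lemma~\ref{lem:regularity_consequences}(a). So $J$ is Bellman-side NDE on $X$.

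\emph{Necessity.} Suppose $J$ is Bellman-side NDE on $X$, and fix $x=(m,P)\in X$. Then $\Delta_J(x,\eta)=0$ for all $\eta\in[0,P)$. I would use the local expansion~\eqref{eq:local_expansion} of Lemma~\ref{lem:local_voi}:
\[
    0=\Delta_J(x,\eta)=-\eta\,\mathcal S_J(m,P)+o(\eta), \qquad \eta\downarrow0.
\]
Dividing by $\eta$ and letting $\eta\downarrow0$ yields $\mathcal S_J(m,P)=0$. An equivalent route is to argue by contradiction with Lemma~\ref{lem:local_voi}(b)--(c): a nonzero source at $x$ forces $\Delta_J(x,\eta)\neq0$ for small $\eta>0$. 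Since $x$ is arbitrary, $\mathcal S_J\equiv0$ on $X$.

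Finally, if $\mathcal S_J\equiv0$ and Assumption~\ref{ass:ce_centered_cost} holds, then $J$ is Bellman-side NDE on $X$ by the first part. Theorem~\ref{thm:CE_opt_value_side} then gives unique minimization of $Q_J(x,\cdot)$ by $\nu(x)$ at every $x\in X$.

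The only delicate point is the necessity step. It relies on the first-order expansion at $\eta=0$, i.e., on right-continuity of $s\mapsto E\{\mathcal S_J(m+\sqrt{s}w,P-s)\}$ at $s=0$. That continuity comes from continuity of $\partial_PJ$ and $\partial_{mm}J$ together with the domination~\eqref{eq:domination} and dominated convergence. Since Lemma~\ref{lem:local_voi} is already established, I will simply invoke it, so no real obstacle remains.
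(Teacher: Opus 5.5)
Your proposal is correct and follows the paper's proof essentially step for step: sufficiency via Corollary~\ref{cor:zero_source}(a), necessity by dividing the local expansion~\eqref{eq:local_expansion} of Lemma~\ref{lem:local_voi} by $\eta$ and letting $\eta\downarrow0$, and the CE claim via Theorem~\ref{thm:CE_opt_value_side}. You also treat the case $\eta=0$ explicitly and obtain Assumption~\ref{ass:well_defined} from Lemma~\ref{lem:regularity_consequences}(a); both are small details the paper leaves implicit.
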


\begin{proof}
The sufficiency direction follows directly from
Corollary~\ref{cor:zero_source}: if \(\mathcal S_J\equiv0\) on \(X\), then
\[
    E\{J(m+\sqrt{\eta}w,P-\eta)\}=J(m,P),
    \quad \forall x\in X,\  \forall \eta\in[0,P),
\]
so \(J\) is Bellman-side NDE on $X$.

Conversely, suppose that \(J\) is Bellman-side NDE on $X$.
For a fixed \(x\in X\) and for all $ \eta\in[0,P)$
\begin{equation}\label{eq:nde_def}
    \Delta_J(x,\eta)= E\{J(m+\sqrt{\eta}w,P-\eta)\}-J(m,P)=0.
\end{equation}
On the other hand, Lemma~\ref{lem:local_voi} gives the local expansion
\begin{align}
        \Delta_J(x,\eta)&=E\{J(m+\sqrt{\eta}w,P-\eta)\}-J(m,P) \nonumber\\
    &=
    -\eta\,\mathcal S_J(m,P)+o(\eta),
    \qquad \eta\downarrow0.\nonumber
\end{align}
Combining this with~\eqref{eq:nde_def} gives
\( 
    0=-\eta\,\mathcal S_J(m,P)+o(\eta).
\)
Dividing it by \(\eta\) and letting \(\eta\downarrow0\) yields
\( 
    \mathcal S_J(m,P)=0.
\)
Since \(x\in X\) was arbitrary, \(\mathcal S_J\equiv0\) on \(X\).
The final claim of CE optimality follows directly from Theorem~\ref{thm:CE_opt_value_side}.
\end{proof}
% We note that \(J(m,P)=m^2+P\) is the posterior second moment of the unknown parameter, i.e.,
% \(J(m,P)=E[\theta^2\mid x]\).
% Its Bellman-side NDE can therefore also be understood as a consequence of the tower property of conditional expectation, as stated in the following lemma.

\section{Bellman-side NDE Function Space}
\label{sec:function_space}

The previous section characterized Bellman-side NDE on $X$ by the zero-source condition $\mathcal S_J\equiv0$. 
A natural question is therefore which functions $J$ satisfy this condition.
% , i.e., the structure of the entire class of Bellman-side NDE functions. 
This section answers that question.
Section~\ref{subsec:representation_theorem} develops the Gaussian convolution representation of Bellman-side NDE  functions and the resulting function space, while Section~\ref{subsec:poly_NDE_fun_space} characterizes
the polynomial subspace.

\subsection{Gaussian convolution representation of Bellman-side NDE  functions}
\label{subsec:representation_theorem}

We introduce the \emph{complete-information boundary function}
\(\phi_J:\Re\to\Re\), defined as the $P\downarrow0$ boundary trace\footnote{The term \emph{boundary trace} is standard in the partial differential equation literature,
	whereas \emph{complete-information boundary function} emphasizes its control
	interpretation.} of \(J\),
\begin{equation}
    \phi_J(m)=\lim_{P\downarrow0}J(m,P),
    \qquad m\in\Re,
    \label{eq:phi-J}
\end{equation}
% the limit being locally uniform in \(m\). 
when the limit on the right exists.
This boundary function carries a
direct control interpretation. As \(P\downarrow0\) the posterior uncertainty
vanishes and the posterior collapses to a point mass at \(m\). Thus, the unknown
parameter is then known exactly and equals the posterior mean. Accordingly,
\(\phi_J(m)\) is the cost-to-go under complete information about the parameter.
As a notational shorthand we write \(J(m,0)=\phi_J(m)\). Since \(X=\Re\times\Re_{++}\) is open, this is a convention for the limit as $P\downarrow0$  and does not assume that \((m,0)\in X\).

\begin{assumption}%[Complete-information boundary function]
\label{ass:boundary_trace}
The function \(J\) satisfies
% Assumption~\ref{ass:regularity} and, in addition,
% the following conditions.
\begin{enumerate}
    % \item[(a)] 
    % % \emph{Complete-information boundary function.}
    % The boundary trace~\eqref{eq:phi-J} exists: there is a
    % function \(\phi_J:\Re\to\Re\) such that
    % \[
    %     \lim_{P\downarrow0}J(m,P)=\phi_J(m)
    %     \qquad\text{locally uniformly in } m\in\Re.
    % \]
    \item[(a)] The convergence in~\eqref{eq:phi-J} is locally uniform in \(m\), i.e., for every compact
    interval \(K\subset\Re\),
    \[
        \sup_{m\in K}|J(m,P)-\phi_J(m)|\to0,
        \qquad P\downarrow0.
    \]
    \item[(b)] \(J\) satisfies a polynomial growth bound uniformly near
    \(P=0\): for every \(\bar P>0\), there exist constants \(c>0\)
    and \(q\ge0\) such that
    \[
        |J(m,P)|\le c(1+|m|^q),
        \qquad \forall m\in\Re,\ P\in(0,\bar P].
    \]
\end{enumerate}
\end{assumption}

% The complete-information boundary function \(\phi_J(m)=J(m,0)\) is the function under
% complete information. 
% Theorem~\ref{thm:widder_representation}
% shows that this complete-information boundary function has a unique Bellman-side NDE Gaussian convolution:
% its posterior expectation under the current belief. Thus, before stating
% the result, the central message is 
% NDE value is equal to posterior expectation of complete-information boundary function, as stated in the following proposition.

Throughout the remainder of the paper, we work with the function class
\begin{equation}
\label{eq:H}
    \mathcal H
    =
    \{J:X\to\Re \mid J \text{ satisfies Assumptions~\ref{ass:regularity} and~\ref{ass:boundary_trace}}\}.
\end{equation}
Within $\mathcal H$, we first define the Bellman-side NDE function space by~Theorem~\ref{thm:zero_dual_free_eqiv},
\begin{equation}
\label{eq:H-NDE}
    \mathcal H_{\mathrm{NDE}}
    =
    \{J\in\mathcal H \mid \mathcal S_J(x)=0,\ \forall x\in X\}.
\end{equation}
We next show that, for any $J\in\mathcal H$, its complete-information boundary function
inherits two properties: continuity and polynomial growth.
We collect these functions in the class
\begin{equation}
    \begin{aligned}
        \mathcal B
        =&
        \bigl\{\phi:\Re\to\Re \mid
        \phi \text{ is continuous}, \\
        &\qquad
        |\phi(m)|\le c_\phi(1+|m|^{q_\phi})
        \text{ for some } c_\phi>0,\ q_\phi\ge0\bigr\},
    \end{aligned}
    \label{eq:B}
\end{equation}
% Under the above assumptions, the complete-information boundary function inherits two properties: continuity and polynomial growth, that are used in the representation results of this section.
% \begin{lemma}[Properties of the complete-information boundary function]
% 	\label{lem:phi_\partial_P Jroperties}
% 	Suppose $J$ satisfies Assumptions~\ref{ass:regularity} and
% 	\ref{ass:boundary_trace}. Then $\phi_J$, defined
% 	in~\eqref{eq:phi-J}, is continuous on $\Re$ and of polynomial
% 	growth: there exist constants $c>0$ and $q\ge0$ such that
% 	\[
% 	|\phi_J(m)|\le c\,(1+|m|^{q}),
% 	\qquad m\in\Re .
% 	\]
% \end{lemma}
and define two maps acting in opposite directions between $\mathcal H$ and $\mathcal B$: the boundary trace $J\mapsto\phi_J$ and the Gaussian convolution
$\phi\mapsto J_\phi$, which comes shortly.

\begin{lemma}[Properties of $\phi_J$]
\label{lem:H_to_B}
Let $J\in\mathcal H$. Then $\phi_J$, defined in \eqref{eq:phi-J}, is continuous
on $\Re$ and of polynomial growth, i.e., $\phi_J\in\mathcal B$.
\end{lemma}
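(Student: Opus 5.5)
We prove the two properties separately. Both follow directly from Assumption~\ref{ass:boundary_trace}, together with the continuity of $J$ guaranteed by Assumption~\ref{ass:regularity}(a). No use of the heat equation or of the source $\mathcal S_J$ is needed, since $\mathcal H$ is not restricted to NDE functions.

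\emph{Continuity.} Since $J$ has continuous partial derivatives $\partial_m J$ and $\partial_P J$ on the open set $X$, $J$ is continuous on $X$. In particular, for each fixed $P>0$ the map $m\mapsto J(m,P)$ is continuous on $\Re$.

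Fix a compact interval $K\subset\Re$. By Assumption~\ref{ass:boundary_trace}(a), the functions $J(\cdot,P)$ converge to $\phi_J$ uniformly on $K$ as $P\downarrow0$. Concretely, take any sequence $P_n\downarrow0$. Then $J(\cdot,P_n)\to\phi_J$ uniformly on $K$, and the uniform limit of continuous functions is continuous. Hence $\phi_J$ is continuous on $K$. Since every point of $\Re$ lies in the interior of some compact interval, $\phi_J$ is continuous on $\Re$.

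This is the only step requiring any care. One must pass from the continuum limit $P\downarrow0$ to a sequence, and use locally uniform convergence rather than pointwise convergence. The standard $\varepsilon/3$ argument handles this directly: for $m,m'\in K$,
\[
|\phi_J(m)-\phi_J(m')| \le 2\sup_{K}|J(\cdot,P)-\phi_J| + |J(m,P)-J(m',P)|,
\]
with $P$ chosen small first and then $m'$ close to $m$.

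\emph{Polynomial growth.} Fix $\bar P=1$ in Assumption~\ref{ass:boundary_trace}(b). This gives constants $c>0$ and $q\ge0$ such that
\[
|J(m,P)|\le c(1+|m|^q) \quad \text{for all } m\in\Re,\ P\in(0,1].
\]
For each $m$, $\phi_J(m)$ is the pointwise limit of $J(m,P)$ as $P\downarrow0$. Letting $P\downarrow0$ in the inequality therefore yields $|\phi_J(m)|\le c(1+|m|^q)$.

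Taking $c_\phi=c$ and $q_\phi=q$ shows that $\phi_J$ satisfies the growth condition in~\eqref{eq:B}. Combined with continuity, this gives $\phi_J\in\mathcal B$.
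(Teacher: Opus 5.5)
Your proof is correct and takes essentially the same route as the paper: continuity follows because the continuous functions $J(\cdot,P)$ converge locally uniformly to $\phi_J$ (Assumption~\ref{ass:boundary_trace}(a)), and polynomial growth follows by letting $P\downarrow0$ in the uniform bound of Assumption~\ref{ass:boundary_trace}(b). Your explicit sequence and $\varepsilon/3$ argument simply spells out the uniform-limit theorem that the paper cites.
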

\begin{proof}
See Appendix~\ref{app:H_to_B}.
\end{proof}

The preceding lemma shows that the boundary trace maps $\mathcal H$ into
$\mathcal B$. Conversely, starting from any $\phi\in\mathcal B$, the next
lemma constructs a function $J_\phi\in\mathcal H$ by Gaussian convolution.

\begin{lemma}[Gaussian convolution]
\label{lem:B_to_H}
Let $\phi\in\mathcal B$ and define
\[
    J_\phi(m,P)
    =
    E\{\phi(m+\sqrt P\,z)\},
    \qquad z\sim N(0,1).
\]
Then $J_\phi\in\mathcal H$, $\mathcal S_{J_\phi}\equiv0$ on $X$, and
$\phi_{J_\phi}=\phi$. In particular, $J_\phi\in\mathcal H_{\mathrm{NDE}}$.
\end{lemma}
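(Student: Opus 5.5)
The plan is to write $J_\phi$ as a convolution with the heat kernel,
\[
J_\phi(m,P)=\int_{\Re}\phi(y)\,k_P(m-y)\,dy,\qquad k_P(r)=\frac{1}{\sqrt{2\pi P}}e^{-r^2/(2P)},
\]
and to work throughout with this representation. All derivatives will then fall on the kernel, which is $C^\infty$ in $(m,P)$ on $X$ and satisfies $\partial_P k_P=\frac12\partial_{rr}k_P$. Polynomial growth of $\phi$ will justify differentiating under the integral.

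For the regularity conditions of Assumption~\ref{ass:regularity}, I fix a compact interval $\mathcal I=[a,b]\subset(0,\infty)$. The derivatives $\partial_m k_P$, $\partial_{mm}k_P$ and $\partial_P k_P$ are, uniformly for $P\in\mathcal I$, bounded by a polynomial in $|m-y|$ times $e^{-(m-y)^2/(2b)}$. Multiplying by $|\phi(y)|\le c_\phi(1+|y|^{q_\phi})$ and using $|y|^{q}\le 2^{q}(|m|^{q}+|m-y|^{q})$ gives a bound of the form $c_{\mathcal I}(1+|m|^{q_\phi})$. The same estimate, applied locally uniformly in $(m,P)$, supplies an integrable dominating function. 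Dominated convergence then gives that $J_\phi$ is continuously differentiable in the required orders, with derivatives obtained by differentiating the kernel, and that they satisfy the locally uniform polynomial growth in (b). Since $\partial_P k_P-\frac12\partial_{mm}k_P=0$ pointwise, differentiating under the integral yields $\mathcal S_{J_\phi}\equiv0$ on $X$.

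Assumption~\ref{ass:boundary_trace}(b) follows directly from the probabilistic form. For $P\le\bar P$,
\[
|J_\phi(m,P)|\le c_\phi E\{1+|m+\sqrt P z|^{q_\phi}\}\le c(1+|m|^{q_\phi}),
\]
with $c$ depending on $\bar P$ through $E|z|^{q_\phi}$. For the boundary trace and its locally uniform convergence (Assumption~\ref{ass:boundary_trace}(a)), I fix a compact $K$ and $\varepsilon>0$ and split $E\{|\phi(m+\sqrt P z)-\phi(m)|\}$ on the events $\{|z|\le R\}$ and $\{|z|>R\}$. On the tail, polynomial growth and Cauchy--Schwarz give a contribution bounded by $C\,P(|z|>R)^{1/2}$ uniformly in $m\in K$ and $P\le1$, which is below $\varepsilon$ for $R$ large. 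On $\{|z|\le R\}$, $\phi$ is uniformly continuous on the compact set $K+[-R,R]$, so this part tends to $0$ uniformly in $m\in K$ as $P\downarrow0$. Hence $\phi_{J_\phi}=\phi$ and the convergence is locally uniform.

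Combining these steps gives $J_\phi\in\mathcal H$, and $\mathcal S_{J_\phi}\equiv0$ places it in $\mathcal H_{\mathrm{NDE}}$. I expect the main technical obstacle to be the bookkeeping for the $P$-derivative. There, the factor $P^{-1}$ and the terms $(m-y)^2/P^2$ must be controlled, and this is why $P$ is kept in a compact interval bounded away from zero. Otherwise the argument is a standard dominated-convergence one.
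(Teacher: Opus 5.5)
Your proof is correct and follows essentially the paper's route: you represent $J_\phi$ through the heat kernel, justify differentiation under the integral sign by the polynomial growth of $\phi$, get $|J_\phi(m,P)|\le c(1+|m|^{q_\phi})$ from the probabilistic form, and obtain $\mathcal S_{J_\phi}\equiv0$ from the heat equation. The only differences are that you prove smoothness and the locally uniform convergence $J_\phi(\cdot,P)\to\phi$ directly (via your $\{|z|\le R\}$ versus $\{|z|>R\}$ split), where the paper cites standard heat-kernel results, and that you bound $\partial_P J_\phi$ through the kernel derivative instead of via $\partial_P J_\phi=\frac12\partial_{mm}J_\phi$.
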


\begin{proof}
See Appendix~\ref{app:B_to_H}.
\end{proof}

With Lemmas~\ref{lem:H_to_B} and~\ref{lem:B_to_H}, the following theorem shows that the Bellman-side NDE functions admit a Gaussian convolution representation.
% restricted to
% $\mathcal H_{\mathrm{NDE}}$, these two maps are mutually inverse.

\begin{theorem}[Gaussian convolution representation]
\label{thm:widder_representation}
Let \(J\in\mathcal H\), and let \(\phi_J\) denote its
complete-information boundary function. Then \(J\) is Bellman-side NDE on
\(X\) if and only if
\[
    J
    =
    J_{\phi_J}
    \qquad\text{on }X,
\]
i.e.,
\[
    J(m,P)
    =
    E\big\{
        \phi_J(m+\sqrt P\,z)
    \big\}
    =
    E\big\{
        \phi_J(\theta)\mid x
    \big\},
\]
where \(z\sim N(0,1)\) and
\(\theta\mid x\sim N(m,P)\).
\end{theorem}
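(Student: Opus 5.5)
The easy direction is "if". Suppose $J=J_{\phi_J}$ on $X$. By Lemma~\ref{lem:H_to_B}, $\phi_J\in\mathcal B$. Lemma~\ref{lem:B_to_H} then gives $\mathcal S_{J_{\phi_J}}\equiv0$ on $X$, so $\mathcal S_J\equiv0$. Theorem~\ref{thm:zero_dual_free_eqiv} now shows that $J$ is Bellman-side NDE on $X$. The second expression in the statement is only a rewriting: $m+\sqrt P\,z\sim N(m,P)$ has exactly the law of $\theta\mid x$.

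For the "only if" direction, I will not invoke a uniqueness theorem for the heat equation (Widder/Tychonoff type). Instead I will use the NDE identity directly. Suppose $J$ is Bellman-side NDE on $X$ and fix $x=(m,P)\in X$. For every $\eta\in[0,P)$, Definition~\ref{def:bellman_de_nde} gives
\[
    J(m,P)=E\{J(m+\sqrt{\eta}\,w,P-\eta)\},\qquad w\sim N(0,1).
\]
Set $\eta=P-\epsilon$ with $\epsilon\in(0,P]$. This gives $J(m,P)=E\{J(m+\sqrt{P-\epsilon}\,w,\epsilon)\}$ for all small $\epsilon>0$. I then let $\epsilon\downarrow0$ and aim to show that the right-hand side converges to $E\{\phi_J(m+\sqrt P\,w)\}=J_{\phi_J}(m,P)$.

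The main step is this limit interchange, and I will handle it by dominated convergence.

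\emph{Pointwise convergence.} For fixed $w$, the points $m_\epsilon=m+\sqrt{P-\epsilon}\,w$ lie in a compact interval $K$ around $m+\sqrt P\,w$, and $m_\epsilon\to m+\sqrt P\,w$. Write
\[
    |J(m_\epsilon,\epsilon)-\phi_J(m+\sqrt P w)|
    \le\sup_{K}|J(\cdot,\epsilon)-\phi_J|
    +|\phi_J(m_\epsilon)-\phi_J(m+\sqrt P w)|.
\]
The first term tends to zero by Assumption~\ref{ass:boundary_trace}(a). The second tends to zero because $\phi_J$ is continuous, by Lemma~\ref{lem:H_to_B}.

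\emph{Domination.} Assumption~\ref{ass:boundary_trace}(b) with $\bar P=P$ gives
\[
    |J(m_\epsilon,\epsilon)|\le c\bigl(1+(|m|+\sqrt P|w|)^q\bigr),
\]
which is integrable against the standard Gaussian density.

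Dominated convergence therefore yields $J(m,P)=E\{\phi_J(m+\sqrt P\,w)\}$. Since $x$ was arbitrary, $J=J_{\phi_J}$ on $X$.

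I expect the only delicate point to be this interchange. Its ingredients are the locally uniform trace convergence, needed because the evaluation point $m_\epsilon$ moves with $\epsilon$, and a growth bound that is uniform in $\epsilon$ near $P=0$. Both are supplied by Assumption~\ref{ass:boundary_trace}. This route also never needs the $C^{2,1}$ regularity or the source equation in the "only if" direction, since the NDE identity along the whole segment $\eta\in[0,P)$ already carries the information.
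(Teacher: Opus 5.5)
Your proof is correct and follows the paper's own argument. The ``only if'' direction passes to the limit $\eta\uparrow P$ in the NDE identity by dominated convergence, with Assumption~\ref{ass:boundary_trace}(a) plus continuity of $\phi_J$ giving pointwise convergence and Assumption~\ref{ass:boundary_trace}(b) giving domination, and the ``if'' direction goes through Lemma~\ref{lem:B_to_H}; your triangle-inequality handling of the moving point $m_\epsilon$ and the explicit choice $\bar P=P$ just spell out steps the paper states in one line.
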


\begin{proof}
See Appendix~\ref{app:widder_representation}.
\end{proof}

 % Statement~(c) is constructive: it represents every Bellman-side NDE function as the posterior expectation of its complete-information boundary function. 
 This representation motivates the following construction of the Bellman-side NDE function space.

\begin{lemma}[Bellman-side NDE function space]
\label{lem:NDE_space}
The Bellman-side NDE function space admits the representation
\[
    \mathcal H_{\mathrm{NDE}}
    =
    \left\{
        (m,P)\mapsto E\bigl\{\phi(m+\sqrt P\,z)\bigr\}
        \;\middle|\;
        \phi\in\mathcal B
    \right\},
\]
where $z\sim N(0,1)$. Moreover, each \(J\in\mathcal H_{\mathrm{NDE}}\) is represented by a unique
\(\phi\in\mathcal B\), i.e., its complete-information boundary function
\(\phi_J\).
\end{lemma}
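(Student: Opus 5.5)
The plan is to prove the set equality by two inclusions and then obtain uniqueness from the boundary trace, relying entirely on Lemmas~\ref{lem:H_to_B}, \ref{lem:B_to_H} and Theorem~\ref{thm:widder_representation}. Write
\[
\mathcal G=\bigl\{(m,P)\mapsto E\{\phi(m+\sqrt P\,z)\}\;\big|\;\phi\in\mathcal B\bigr\}
\]
for the right-hand side.

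For the inclusion $\mathcal G\subseteq\mathcal H_{\mathrm{NDE}}$, we fix $\phi\in\mathcal B$. Lemma~\ref{lem:B_to_H} gives directly that $J_\phi\in\mathcal H$ and $\mathcal S_{J_\phi}\equiv0$ on $X$. By the definition~\eqref{eq:H-NDE}, this means $J_\phi\in\mathcal H_{\mathrm{NDE}}$.

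For the reverse inclusion, we take $J\in\mathcal H_{\mathrm{NDE}}$. Since $\mathcal S_J\equiv0$ and $J\in\mathcal H$, Theorem~\ref{thm:zero_dual_free_eqiv} shows that $J$ is Bellman-side NDE on $X$. Theorem~\ref{thm:widder_representation} then yields $J=J_{\phi_J}$ on $X$. Lemma~\ref{lem:H_to_B} guarantees $\phi_J\in\mathcal B$, so $J$ is the Gaussian convolution of an admissible boundary function. Hence $J\in\mathcal G$.

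For uniqueness, suppose $J=J_\phi=J_{\tilde\phi}$ with $\phi,\tilde\phi\in\mathcal B$. Applying the identity $\phi_{J_\phi}=\phi$ from Lemma~\ref{lem:B_to_H} to both representations gives
\[
\phi=\phi_{J_\phi}=\phi_J=\phi_{J_{\tilde\phi}}=\tilde\phi .
\]
Thus the representing function is unique and equals the complete-information boundary function $\phi_J$. This also shows that the boundary trace and the Gaussian convolution are mutually inverse bijections between $\mathcal H_{\mathrm{NDE}}$ and $\mathcal B$.

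There is no real obstacle at this level, because the analytic work (the Widder-type uniqueness for the heat equation under polynomial growth, and the regularity of $J_\phi$) is already contained in Theorem~\ref{thm:widder_representation} and Lemma~\ref{lem:B_to_H}. The only point needing care is bookkeeping: we must use that $\mathcal H_{\mathrm{NDE}}$ is defined inside $\mathcal H$, so that Theorem~\ref{thm:widder_representation} applies, and that $\phi_{J_\phi}=\phi$ holds for every $\phi\in\mathcal B$, which is what makes uniqueness immediate.
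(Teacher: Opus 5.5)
Your proof is correct and follows the paper's argument essentially verbatim: the same two inclusions via Lemmas~\ref{lem:B_to_H} and~\ref{lem:H_to_B} together with Theorem~\ref{thm:widder_representation}, and the same uniqueness chain built on $\phi_{J_\phi}=\phi$. The one difference is that you explicitly invoke Theorem~\ref{thm:zero_dual_free_eqiv} to pass from $\mathcal S_J\equiv0$ to Bellman-side NDE on $X$, a step the paper leaves implicit in the definition of $\mathcal H_{\mathrm{NDE}}$.
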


\begin{proof}
We prove the two inclusions. Let \(J\in\mathcal H_{\mathrm{NDE}}\). Then
\(J\in\mathcal H\) and \(J\) is Bellman-side NDE on \(X\). By
Lemma~\ref{lem:H_to_B}, its complete-information boundary function satisfies
\(\phi_J\in\mathcal B\), and Theorem~\ref{thm:widder_representation} gives
\( 
    J
    =
    J_{\phi_J},
\)
i.e.,
\[
    J(m,P)
    =
    E\bigl\{
        \phi_J(m+\sqrt P\,z)
    \bigr\},
    \qquad z\sim N(0,1).
\]
Hence \(J\) belongs to the set on the right-hand side.

Conversely, let \(\phi\in\mathcal B\) and define
\[
    J_\phi(m,P)
    =
    E\bigl\{
        \phi(m+\sqrt P\,z)
    \bigr\},
    \qquad z\sim N(0,1).
\]
By Lemma~\ref{lem:B_to_H},
\(J_\phi\in\mathcal H_{\mathrm{NDE}}\). This proves the reverse inclusion and
hence the stated representation.

It remains to show uniqueness. Suppose that
\(
    J=J_{\phi}=J_{\psi}
\)
for some \(\phi,\psi\in\mathcal B\). By
Lemma~\ref{lem:B_to_H},
\(
    \phi_{J_\phi}=\phi
\), \(
    \phi_{J_\psi}=\psi.
\)
Since both \(J_\phi\) and \(J_\psi\) equal \(J\), their
complete-information boundary function is \(\phi_J\). Therefore
\(
    \phi=\phi_J=\psi.
\)
Thus the representing boundary function is unique and is precisely
\(\phi_J\).
\end{proof}

\subsection{Polynomial Bellman-side NDE Function Space}
\label{subsec:poly_NDE_fun_space}

To make the abstract Bellman-side NDE function space $\mathcal H_{\mathrm{NDE}}$ more concrete, we now construct a polynomial subspace.
For \(n\ge0\), define
\begin{equation}\label{eq:poly_basis}
        h_n(m,P)
    =
    E\{
        (m+\sqrt P\,z)^n
    \},
    \qquad
    z\sim N(0,1).
\end{equation}
The functions $h_n$  are the classical \emph{heat polynomials} of Rosenbloom and Widder~\cite{rosenbloom1959expansions}\footnote{%
	Reference~\cite{rosenbloom1959expansions} normalizes the heat equation as
	$u_t=u_{xx}$, whereas the belief-state dynamics give $\partial_P J=\tfrac12 \partial_{mm} J$.
	The two are related by	$t=P/2$, so that
	$h_n(m,P)=v_n(m,P/2)$, where $v_n$ denotes the $n$-th heat polynomial
	of~\cite{rosenbloom1959expansions}.} in the belief coordinates $(m,P)$.
The first few basis functions are
$
    h_0=1$, $
    h_1=m$, $
    h_2=m^2+P$, $
    h_3=m^3+3mP$, $
    h_4=m^4+6m^2P+3P^2$.
Each \(h_n\) satisfies zero source over $X$, i.e., $\mathcal S_{h_n}\equiv0$. Indeed, \(h_n\) is the Gaussian convolution of the polynomial boundary function
\(\phi(m)=m^n\), and hence belongs to
\(\mathcal H_{\mathrm{NDE}}\).

We define the finite-dimensional subspace generated by the polynomial basis functions in~\eqref{eq:poly_basis} up to degree \(n\) as
\[
    \mathcal H_n
    =
    \operatorname{span}\{h_0,\ldots,h_n\}.
\]
The corresponding polynomial Bellman-side NDE space is given by
\[
    \mathcal H_{\mathrm{POLY}}
    =
    \cup_{n\ge0}\mathcal H_n
    =
    \operatorname{span}\{h_n\mid n\ge0\},
\]
which consists of all finite-degree polynomial Bellman-side NDE functions. %, and does not include non-polynomial ones.
% , even when they admit convergent power-series expansions with infinitely many nonzero terms.
% Thus, \(\mathcal H_n\) contains all Bellman-side NDE functions generated by complete-information boundary polynomials of degree at most \(n\), while \(\mathcal H_{\mathrm{poly}}\) collects all polynomial Bellman-side NDE functions.
These spaces form the nested hierarchy
\[
    \mathcal H_0
    \subset  %\subsetneq
    \mathcal H_1
    \subset  %\subsetneq
    \cdots
    \subset  %\subsetneq
    \mathcal H_{\mathrm{POLY}}
    \subset  %\subsetneq
    \mathcal H_{\mathrm{NDE}},
\]
where $\subset$ denotes strict inclusion. In particular, the last inclusion implies that Bellman-side NDE functions on $X$ need not arise from polynomial boundary functions. The following example gives a non-polynomial Bellman-side NDE function on $X$.

\begin{example}[A non-polynomial Bellman-side NDE function]
Let
\[
    \phi_J(\theta)
    =
    \cos(a\theta),
    \qquad
    a\neq 0.
\]
Then \(\phi_J\) is continuous and bounded, and hence belongs to \(\mathcal B\).
Its Gaussian convolution is
\[
    J(m,P)
    =
    E\big\{
        \cos\big(a(m+\sqrt P\,z)\big)
    \big\}
    =
    e^{-\frac12a^2P}\cos(am).
\]
A direct calculation gives
\[
    \partial_P J
    \!=\!
    -\frac12a^2e^{-\frac12a^2P}\cos(am),
\ 
    \partial_{mm} J
    \!=\!
    -a^2e^{-\frac12a^2P}\cos(am).
\]
Thus \(J\in\mathcal H_{\mathrm{NDE}}\) due to 
\( 
    \mathcal S_{J}
    =
    \partial_P J-\frac12\partial_{mm} J
    =
    0
\) for each $x\in X$.
% But its complete-information boundary function \(\cos(am)\) is not a polynomial, \(J\notin\mathcal H_{\mathrm{POLY}}\).
% Although its complete-information boundary function $\cos(am)$ admits an infinite power-series expansion, it is not a polynomial of finite degree and hence does not belong to $\mathcal H_{\mathrm{POLY}}$.
However, $J(m,P)$
is not a polynomial in \((m,P)\) of finite degree, and hence
\(J\notin\mathcal H_{\mathrm{POLY}}\).
\end{example}

A Bellman-side NDE function has a complete-information boundary that is a
polynomial of degree at most $n$ if and only if this function lies in the
$(n+1)$-dimensional space $\mathcal H_n=\operatorname{span}\{h_0,\dots,h_n\}$, as
stated next.

\begin{lemma}[Heat-polynomial characterization] % of polynomial NDE functions]
\label{lem:polynomial_NDE_basis}
% Suppose $J$ satisfies the assumptions of
% Theorem~\ref{thm:widder_representation} and is Bellman-side NDE on $X$. 
% If its complete-information boundary function is
% a polynomial of degree at most $n$, i.e., $\phi_J(m)=J(m,0)=\sum_{i=0}^{n}c_i m^i$,  then
% \[
%     J(m,P)=\sum_{i=0}^{n}c_i\,h_i(m,P),
%     \qquad h_i(m,P)=E\{(m+\sqrt P\,z)^i\}.
% \]
% Let Assumptions~\ref{ass:regularity}
% and~\ref{ass:boundary_trace} hold.
Let \(J\in\mathcal H_{\mathrm{NDE}}\).
Then \(J\) has a complete-information boundary function that is a polynomial
of degree at most \(n\), i.e., $\phi_J(m)=J(m,0)=\sum_{i=0}^{n}c_i m^i$, if and only if \(J\in\mathcal H_n\).
% If a Bellman-side NDE function $J$ on $X$ has a complete-information boundary function $\phi_J(m)$ that is a polynomial of degree at most $n$, i.e., $\phi_J(m)=J(m,0)=\sum_{i=0}^{n}c_i m^i$, then $J$ belongs to the finite-dimensional space
% \[
%     \mathcal H_n=\operatorname{span}\{h_0,\ldots,h_n\}.
% \]
% Conversely, every function in $\mathcal H_n$ is Bellman-side NDE on $X$ and has a
% complete-information boundary function $\phi_J(m)$  that is a polynomial of degree at most $n$.
\end{lemma}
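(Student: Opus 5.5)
The proof rests on the Gaussian convolution representation of Theorem~\ref{thm:widder_representation} together with the linearity of the map $\phi\mapsto J_\phi$. The two directions are handled separately.

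\emph{Forward direction.} Let $J\in\mathcal H_{\mathrm{NDE}}$ with $\phi_J(m)=\sum_{i=0}^n c_i m^i$. Since $J$ is Bellman-side NDE on $X$ and lies in $\mathcal H$, Theorem~\ref{thm:widder_representation} gives
\[
J(m,P)=E\{\phi_J(m+\sqrt P\,z)\}.
\]
Each monomial $(m+\sqrt P\,z)^i$ has finite Gaussian expectation, so linearity of expectation gives
\[
J(m,P)=\sum_{i=0}^n c_i\,E\{(m+\sqrt P\,z)^i\}=\sum_{i=0}^n c_i h_i(m,P).
\]
Hence $J\in\mathcal H_n$.

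\emph{Converse direction.} Let $J\in\mathcal H_{\mathrm{NDE}}$ with $J=\sum_{i=0}^n c_i h_i$. The task is to compute $\phi_J$. Expanding binomially and using $E\{z^k\}=0$ for odd $k$,
\[
h_i(m,P)=m^i+\sum_{k\ge1}\binom{i}{2k}(2k-1)!!\,m^{i-2k}P^{k}.
\]
Thus $h_i$ is a polynomial in $(m,P)$, and $h_i(m,P)\to m^i$ as $P\downarrow0$, locally uniformly in $m$. Alternatively, this limit follows directly from Lemma~\ref{lem:B_to_H}, since $m^i\in\mathcal B$ and $\phi_{J_{m^i}}=m^i$. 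Since the boundary trace is linear, $\phi_J(m)=\sum_{i=0}^n c_i m^i$, which is a polynomial of degree at most $n$.

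The only point needing care is that the representing coefficients in the two directions agree and that membership $J\in\mathcal H_{\mathrm{NDE}}$ is used where needed. In the forward direction, NDE is needed to invoke Theorem~\ref{thm:widder_representation}; in the converse, it is automatic, since each $h_i\in\mathcal H_{\mathrm{NDE}}$ by Lemma~\ref{lem:B_to_H} and $\mathcal H_{\mathrm{NDE}}$ is closed under finite linear combinations. Closure holds because the source $\mathcal S_J$ is linear in $J$, and the growth and boundary conditions of Assumptions~\ref{ass:regularity} and~\ref{ass:boundary_trace} are preserved under finite sums. I expect no real obstacle: the argument is essentially bookkeeping on top of Theorem~\ref{thm:widder_representation}.
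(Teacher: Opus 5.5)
Your proposal is correct and follows the paper's proof essentially step for step: the forward direction applies Theorem~\ref{thm:widder_representation} and linearity of expectation, and the converse reads off the boundary trace from $h_i(m,0)=m^i$ and linearity of the limit. Your explicit binomial formula for $h_i$ and your remark that $\mathcal H_{\mathrm{NDE}}$ is closed under finite sums only add justification for steps the paper takes for granted.
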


\begin{proof}
Since $J$ is Bellman-side NDE on $X$, Theorem~\ref{thm:widder_representation}
gives
\( 
    J(m,P)=E\{\phi_J(m+\sqrt P\,z)\}.
\)
If $\phi_J(m)=\sum_{i=0}^{n}c_i m^i$, then by linearity of expectation,
% \[
%     J(m,P)
%     =E\Bigl\{\sum_{i=0}^{n}c_i(m+\sqrt P\,z)^i\Bigr\}
%     =\sum_{i=0}^{n}c_i\,E\{(m+\sqrt P\,z)^i\}
%     =\sum_{i=0}^{n}c_i\,h_i(m,P),
% \]
\[
\begin{aligned}
    J(m,P)
    &= E\Bigl\{\sum_{i=0}^{n}c_i(m+\sqrt P\,z)^i\Bigr\} \\
    &= \sum_{i=0}^{n}c_i\,E\{(m+\sqrt P\,z)^i\}
     = \sum_{i=0}^{n}c_i\,h_i(m,P).
\end{aligned}
\]
so $J\in\mathcal H_n$.

% Conversely, suppose $J=\sum_{i=0}^{n}c_i h_i$. Each $h_i(m,P)=E\{(m+\sqrt P\,z)^i\}$
% is the Gaussian convolution of the boundary function $m^i$, hence $h_i$ is Bellman-side NDE
% on $X$ by Theorem~\ref{thm:widder_representation}.
% By linearity, $J$ is Bellman-side NDE.
% Moreover, since $h_i(m,0)=m^i$, the complete-information boundary function of $J$
% is 
Conversely, suppose
\( 
    J=\sum_{i=0}^{n}c_i h_i
\).
Since \(h_i(m,0)=m^i\), the complete-information boundary function of \(J\) is
$\phi_J(m)=J(m,0)=\sum_{i=0}^{n}c_i m^i$, which is a polynomial of degree at most $n$. This proves
the equivalence.
\end{proof}

\section{Bellman-side Function Decomposition}
\label{sec:bellman_side_decomposition}

Lemma~\ref{lem:B_to_H} shows that the Gaussian convolution of a complete-information boundary function in \(\mathcal B\) is Bellman-side NDE.
This suggests measuring the dual effect of a general cost-to-go function by its deviation from this Gaussian convolution.

Fix a cost-to-go function $J\in \mathcal H$. 
 The associated \emph{Bellman-side NDE component} is the Gaussian convolution of the complete-information boundary function $\phi_J$, with $z\sim N(0,1)$,
 % defined in~\eqref{eq:phi-J},
\begin{equation}
    J_{\mathrm{NDE}}(m,P)=
    J_{\phi_J}(m,P)
    =
    E\{\phi_J(m+\sqrt{P}\,z)\},
    \label{eq:Jnde_def}
\end{equation}
and the remaining \emph{DE component} is defined by the residual
\begin{equation}
    J_{\mathrm{DE}}(m,P)
    =
    J(m,P)-J_{\mathrm{NDE}}(m,P).
    \label{eq:Jde_def}
\end{equation}
Since \(J\in\mathcal H\),
\(J_{\mathrm{NDE}}\in\mathcal H_{\mathrm{NDE}}\subset\mathcal H\),
and Assumptions~\ref{ass:regularity} and~\ref{ass:boundary_trace} are preserved under finite linear combinations,
we also have \(J_{\mathrm{DE}}\in\mathcal H\).
The following decomposition then holds by construction
\begin{equation}
    J(m,P)=J_{\mathrm{NDE}}(m,P)+J_{\mathrm{DE}}(m,P).
    \label{eq:dec}
\end{equation}
% then holds by construction, requiring no further argument; the content lies
% entirely in the properties of the two components.
%========================= (2) uniqueness =========================
This decomposition is unique, as stated in the following lemma.
\begin{lemma}[Uniqueness of the NDE--DE decomposition]
\label{lem:unique_decomposition}
Let \(J\in\mathcal H\) have complete-information boundary function
\(\phi_J\). Then there exists a unique decomposition~\eqref{eq:dec} such that
\(J_{\mathrm{NDE}}\in\mathcal H_{\mathrm{NDE}}\) has complete-information
boundary function \(\phi_J\). In particular,
\(J_{\mathrm{NDE}}\) and \(J_{\mathrm{DE}}\) are uniquely given
by~\eqref{eq:Jnde_def} and~\eqref{eq:Jde_def}, respectively.
\end{lemma}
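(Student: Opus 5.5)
Existence is already given by the construction~\eqref{eq:Jnde_def}--\eqref{eq:Jde_def}. By Lemma~\ref{lem:H_to_B}, $\phi_J\in\mathcal B$. Lemma~\ref{lem:B_to_H} then gives $J_{\phi_J}\in\mathcal H_{\mathrm{NDE}}$ with $\phi_{J_{\phi_J}}=\phi_J$. So $J_{\mathrm{NDE}}=J_{\phi_J}$ is an element of $\mathcal H_{\mathrm{NDE}}$ whose complete-information boundary function is $\phi_J$. With $J_{\mathrm{DE}}=J-J_{\mathrm{NDE}}$, the identity~\eqref{eq:dec} holds trivially. This settles the existence half.

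For uniqueness, I will suppose that $J=\tilde J_{\mathrm{NDE}}+\tilde J_{\mathrm{DE}}$ is any other decomposition with $\tilde J_{\mathrm{NDE}}\in\mathcal H_{\mathrm{NDE}}$ and $\phi_{\tilde J_{\mathrm{NDE}}}=\phi_J$. The key step is the uniqueness part of Lemma~\ref{lem:NDE_space}, or equivalently Theorem~\ref{thm:widder_representation} applied to $\tilde J_{\mathrm{NDE}}$. Since $\tilde J_{\mathrm{NDE}}\in\mathcal H$ is Bellman-side NDE on $X$, that result gives
\[
\tilde J_{\mathrm{NDE}}=J_{\phi_{\tilde J_{\mathrm{NDE}}}}=J_{\phi_J}=J_{\mathrm{NDE}}
\qquad\text{on }X .
\]
Subtracting from $J$ then forces $\tilde J_{\mathrm{DE}}=J-\tilde J_{\mathrm{NDE}}=J-J_{\mathrm{NDE}}=J_{\mathrm{DE}}$. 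Hence both components are uniquely given by~\eqref{eq:Jnde_def} and~\eqref{eq:Jde_def}.

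The only point needing care is the bookkeeping that $J_{\mathrm{DE}}\in\mathcal H$, so that the decomposition lives in the stated class. I would argue this from the fact that $\mathcal H$ is closed under differences. Assumption~\ref{ass:regularity} is linear in $J$: continuity of derivatives, and polynomial bounds that add with exponent $\max\{q,q'\}$. Assumption~\ref{ass:boundary_trace} is likewise linear: locally uniform convergence of the difference to $\phi_J-\phi_J=0$, plus an additive growth bound. As a consistency check, I would also note that $\phi_{J_{\mathrm{DE}}}\equiv0$.

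I expect no real obstacle. The substantive content, namely that an NDE function is determined by its boundary trace, is already contained in Theorem~\ref{thm:widder_representation}, and the lemma reduces to invoking it.
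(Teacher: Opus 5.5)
Your proof is correct and follows the paper's argument. The paper cites Lemma~\ref{lem:NDE_space} to get a unique $J_{\mathrm{NDE}}\in\mathcal H_{\mathrm{NDE}}$ with trace $\phi_J$ and then reads off $J_{\mathrm{DE}}$ from~\eqref{eq:Jde_def}; you unpack that citation into Lemmas~\ref{lem:H_to_B} and~\ref{lem:B_to_H} and Theorem~\ref{thm:widder_representation}, with the fact doing the work being the representation $\tilde J=J_{\phi_{\tilde J}}$ for every $\tilde J\in\mathcal H_{\mathrm{NDE}}$ (rather than the ``unique $\phi$'' clause of Lemma~\ref{lem:NDE_space}), which is exactly what your displayed computation uses.
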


\begin{proof}
By Lemma~\ref{lem:NDE_space}, there exists a unique
\(J_{\mathrm{NDE}}\in\mathcal H_{\mathrm{NDE}}\), i.e., the Gaussian convolution $J_{\phi_J}$
in~\eqref{eq:Jnde_def}, with complete-information
boundary function \(\phi_J\). Since \(J\) is fixed,
\(
    J_{\mathrm{DE}}\)
is then uniquely determined by~\eqref{eq:Jde_def}. Hence the decomposition
is unique.
\end{proof}

The next theorem records the three
properties of $J_{\mathrm{DE}}$ that make it the carrier of the dual effect.

%========================= (3) theorem: two legs =========================
\begin{theorem}[Three
properties of $J_{\mathrm{DE}}$]
\label{thm:source_gap}
Suppose $J$ satisfies Assumptions~\ref{ass:regularity}
and~\ref{ass:boundary_trace}. Then:
\begin{enumerate}
    \item[(a)] $J_{\mathrm{DE}}$ has zero boundary trace,
    \begin{equation}
        J_{\mathrm{DE}}(m,0)=0,
        \qquad\forall\, m\in\Re.
        \label{eq:Jde_zero_trace}
    \end{equation}
    \item[(b)] $J_{\mathrm{DE}}$ carries the full source of $J$,
    \begin{equation}
        \mathcal S_J(m,P)
        =
        \mathcal S_{J_{\mathrm{DE}}}(m,P),
        \qquad \forall x\in X.
        \label{eq:source_invariance}
    \end{equation}

     \item[(c)] $J_{\mathrm{DE}}$ carries the entire dual-effect function:
\[
    \Delta_J(x,\eta)
    =
    \Delta_{J_{\mathrm{DE}}}(x,\eta),
    \qquad
    \forall x\in X,\ \eta\in[0,P).
\]
\end{enumerate}
\end{theorem}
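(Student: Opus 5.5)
The plan is to prove the three properties in order, using linearity of the source operator and of the dual-effect function, together with Lemma~\ref{lem:B_to_H} applied to $\phi=\phi_J$.

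By Lemma~\ref{lem:H_to_B}, $\phi_J\in\mathcal B$. Lemma~\ref{lem:B_to_H} then gives three facts:
\[
J_{\mathrm{NDE}}=J_{\phi_J}\in\mathcal H_{\mathrm{NDE}},\qquad \mathcal S_{J_{\mathrm{NDE}}}\equiv0,\qquad \phi_{J_{\mathrm{NDE}}}=\phi_J.
\]

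For (a), the boundary trace is a limit as $P\downarrow0$, so it is linear whenever both limits exist. By Assumption~\ref{ass:boundary_trace}(a), $J(m,P)\to\phi_J(m)$, and $J_{\mathrm{NDE}}(m,P)\to\phi_J(m)$ by the identity $\phi_{J_{\mathrm{NDE}}}=\phi_J$. Subtracting gives $J_{\mathrm{DE}}(m,P)\to0$ for every $m$, that is, $J_{\mathrm{DE}}(m,0)=0$ in the paper's notational convention.

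For (b), both $J$ and $J_{\mathrm{NDE}}$ satisfy Assumption~\ref{ass:regularity}(a). Their difference $J_{\mathrm{DE}}$ therefore has continuous $\partial_P$, $\partial_m$ and $\partial_{mm}$, and since $J\mapsto\partial_PJ-\tfrac12\partial_{mm}J$ is linear, \eqref{eq:source_eq} gives
\[
\mathcal S_{J_{\mathrm{DE}}}=\mathcal S_J-\mathcal S_{J_{\mathrm{NDE}}}=\mathcal S_J
\]
pointwise on $X$.

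For (c), we first need the relevant expectations to be finite. By Lemma~\ref{lem:regularity_consequences}(a), both $J$ and $J_{\mathrm{NDE}}$ satisfy Assumption~\ref{ass:well_defined}, so their dual-effect functions are defined; $J_{\mathrm{DE}}$ also lies in $\mathcal H$, as noted before \eqref{eq:dec}. Linearity of expectation in \eqref{eq:delta_J} then gives
\[
\Delta_{J}=\Delta_{J_{\mathrm{NDE}}}+\Delta_{J_{\mathrm{DE}}}.
\]
By Theorem~\ref{thm:zero_dual_free_eqiv} (or directly by Corollary~\ref{cor:zero_source}(a)), $J_{\mathrm{NDE}}$ is Bellman-side NDE on $X$, so $\Delta_{J_{\mathrm{NDE}}}(x,\eta)=0$ for all $x\in X$ and $\eta\in[0,P)$. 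This yields (c). Alternatively, (c) follows from (b) by inserting it into the source representation \eqref{eq:source_rep} of Theorem~\ref{thm:exact_source_representation}, applied to both $J$ and $J_{\mathrm{DE}}$.

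The only step that needs care is confirming that $J_{\mathrm{DE}}$ satisfies the regularity and integrability needed for its source and dual-effect function to be defined. This reduces to the closure of Assumptions~\ref{ass:regularity} and~\ref{ass:boundary_trace} under differences. Pointwise bounds add, so one takes the maximum of the two exponents $q$ and the sum of the two constants $c$. Uniform limits on compacts also subtract, so the locally uniform convergence in Assumption~\ref{ass:boundary_trace}(a) is preserved. Everything else is linear bookkeeping.
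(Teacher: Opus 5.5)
Your proposal is correct and matches the paper's proof for (a) and (b): subtract the boundary traces, and use linearity of the source operator together with $\mathcal S_{J_{\mathrm{NDE}}}\equiv0$. For (c) the paper takes your ``alternative'' route of inserting (b) into the source representation \eqref{eq:source_rep}; your main route via $\Delta_J=\Delta_{J_{\mathrm{NDE}}}+\Delta_{J_{\mathrm{DE}}}$ and Corollary~\ref{cor:zero_source}(a) is an equally valid use of the same ingredient, and your check that $\mathcal H$ is closed under differences supplies a step the paper only asserts (with the harmless correction that the constant should be $2(c_1+c_2)$ rather than $c_1+c_2$).
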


\begin{proof}
\emph{(a).} As the Gaussian convolution of $\phi_J$, the component
$J_{\mathrm{NDE}}$ attains $\phi_J$ as its boundary trace, so that $J_{\mathrm{NDE}}(m,0)=\phi_J(m)$.
% in the shorthand of~\eqref{eq:phi-J}. 
By~\eqref{eq:phi-J}, we have \ $J(m,0)=\phi_J(m)$. Subtracting these two equalities
\[
    J_{\mathrm{DE}}(m,0)
    =J(m,0)-J_{\mathrm{NDE}}(m,0)
    =0,
    \qquad\forall\, m\in\Re.
\]

\emph{(b).} The source operator $\mathcal S_J=\partial_P J-\tfrac12 \partial_{mm} J$ is linear in
$J$. Since $J_{\mathrm{NDE}}\in\mathcal H_{\mathrm{NDE}}$ solves the heat
equation, its source vanishes: $\mathcal S_{J_{\mathrm{NDE}}}\equiv0$ on $X$.
Hence, by the definition~\eqref{eq:Jde_def} of $J_{\mathrm{DE}}$ and the linearity of the source operator,
\[
    \mathcal S_{J_{\mathrm{DE}}}
    \equiv\mathcal S_{J}-\mathcal S_{J_{\mathrm{NDE}}}
    \equiv\mathcal S_{J}
    \qquad\text{on } X. 
\]

\emph{(c).} By Theorem~\ref{thm:exact_source_representation},
\( 
    \Delta_J(x,\eta)   =
    -
    \int_0^\eta
    E\!\left\{
        \mathcal S_J(m+\sqrt{s}w,P-s)
    \right\}ds
\). Thus, property~(b),
$    \mathcal S_{J_{\mathrm{DE}}}
    \equiv\mathcal S_{J}
$ on $X$, gives property~(c).
\end{proof}

%========================= (4) corollary: use both legs =========================

Theorem~\ref{thm:source_gap} attributes the entire dual effect of $J$ to the
single component $J_{\mathrm{DE}}$. 
The component $J_{\mathrm{NDE}}$ contributes a $u$-independent term
to the Bellman objective~\eqref{eq:Q}. For a fixed stage cost, CE optimality is thus decided by $J_{\mathrm{DE}}$ alone.
We next characterize Bellman-side NDE and CE optimality in the case
where $J_{\mathrm{DE}}$ vanishes.

\begin{corollary}[Bellman-side NDE and CE optimality on \(J_{\mathrm{DE}}\)]
\label{cor:nde_iff_Jde_zero}
Under the assumptions of Theorem~\ref{thm:source_gap}, \(J\) is Bellman-side
NDE on \(X\) if and only if
\begin{equation}
    J_{\mathrm{DE}}\equiv0
    \qquad\text{on }X.
    \label{eq:nde_iff}
\end{equation}
If this condition holds and Assumption~\ref{ass:ce_centered_cost} is satisfied, the CE input \(\nu(x)\) in~\eqref{eq:ce_input} uniquely minimizes the Bellman
objective \(Q_J(x,\cdot)\) in~\eqref{eq:Q} at every \(x\in X\).
\end{corollary}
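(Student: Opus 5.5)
The plan is to chain the equivalences already established, with $\mathcal S_J$ as the intermediate object. By Theorem~\ref{thm:zero_dual_free_eqiv}, under Assumption~\ref{ass:regularity}, $J$ is Bellman-side NDE on $X$ if and only if $\mathcal S_J\equiv0$ on $X$. By Theorem~\ref{thm:source_gap}(b), $\mathcal S_J\equiv\mathcal S_{J_{\mathrm{DE}}}$. It therefore suffices to show that $\mathcal S_{J_{\mathrm{DE}}}\equiv0$ on $X$ if and only if $J_{\mathrm{DE}}\equiv0$ on $X$. The CE-optimality claim will then follow directly from Theorem~\ref{thm:CE_opt_value_side}, or from the last part of Theorem~\ref{thm:zero_dual_free_eqiv}.

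\emph{Easy direction.} Suppose $J_{\mathrm{DE}}\equiv0$. Then $J=J_{\mathrm{NDE}}=J_{\phi_J}$. By Lemma~\ref{lem:B_to_H}, applied with $\phi_J\in\mathcal B$ (Lemma~\ref{lem:H_to_B}), $J$ lies in $\mathcal H_{\mathrm{NDE}}$ and is hence Bellman-side NDE on $X$. Alternatively, Theorem~\ref{thm:widder_representation} gives the same conclusion immediately.

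\emph{Converse direction.} Suppose $J$ is Bellman-side NDE on $X$. Since $J\in\mathcal H$, Theorem~\ref{thm:widder_representation} gives $J=J_{\phi_J}$ on $X$. Then $J_{\mathrm{DE}}=J-J_{\phi_J}\equiv0$ by the definition~\eqref{eq:Jde_def}. This route sidesteps the need to argue separately that a zero-trace solution of the heat equation within $\mathcal H$ vanishes identically. That uniqueness statement, a Widder/Tychonoff-type uniqueness under polynomial growth, is the genuine analytic content, and it is already absorbed into Theorem~\ref{thm:widder_representation}. If one instead wanted to go through $J_{\mathrm{DE}}$ directly, the steps would be: note that $J_{\mathrm{DE}}\in\mathcal H$; note that $J_{\mathrm{DE}}$ has zero trace by Theorem~\ref{thm:source_gap}(a); note that $\mathcal S_{J_{\mathrm{DE}}}\equiv0$; and then apply Theorem~\ref{thm:widder_representation} to $J_{\mathrm{DE}}$ to get $J_{\mathrm{DE}}=J_{0}\equiv0$. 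I expect this uniqueness step to be the only potential obstacle, and it is handled by invoking the earlier theorem rather than reproving it.

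Finally, under either direction, NDE on $X$ together with Assumption~\ref{ass:ce_centered_cost} (Assumption~\ref{ass:well_defined} being implied by Lemma~\ref{lem:regularity_consequences}(a)) lets Theorem~\ref{thm:CE_opt_value_side} yield unique minimization of $Q_J(x,\cdot)$ by $\nu(x)$ at every $x\in X$.
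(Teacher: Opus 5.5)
Your proof is correct and takes essentially the paper's route: Theorem~\ref{thm:widder_representation} gives that $J$ is Bellman-side NDE on $X$ if and only if $J=J_{\phi_J}=J_{\mathrm{NDE}}$, which by~\eqref{eq:Jde_def} is $J_{\mathrm{DE}}\equiv0$, and the CE claim then follows from Theorem~\ref{thm:CE_opt_value_side}. Your opening reduction through $\mathcal S_J$ and Theorem~\ref{thm:source_gap}(b) is never actually used in the two directions you prove, so you can drop it.
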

\begin{proof}
% By definition,
% \(
%     J_{\mathrm{NDE}}
%     =
%     J_{\phi_J}\),
%     \(J_{\mathrm{DE}}
%     =
%     J-J_{\mathrm{NDE}}.
% \)
By Theorem~\ref{thm:widder_representation}, \(J\) is Bellman-side NDE on
\(X\) if and only if
\(
    J
    =
    J_{\phi_J}
    =
    J_{\mathrm{NDE}}\)
on $X$.
By the definition of \(J_{\mathrm{DE}}\), this is equivalent to
\(
    J_{\mathrm{DE}}\equiv 
    J-J_{\mathrm{NDE}}
    \equiv0
\) on $X$.
The final CE-optimality claim follows from
Theorem~\ref{thm:CE_opt_value_side}.
\end{proof}

% \begin{corollary}[Bellman-side NDE via $J_{\mathrm{DE}}$]
% \label{cor:nde_iff_Jde_zero}
% Under the assumptions of Theorem~\ref{thm:source_gap}, $J$ is Bellman-side NDE on
% $X$ if and only if
% \begin{equation}
%     J_{\mathrm{DE}}\equiv0
%     \qquad\text{on } X.
%     \label{eq:nde_iff}
% \end{equation}
% If, in addition, Assumption~\ref{ass:ce_centered_cost} holds, then
% in this case the CE input $\nu(x)$ in~\eqref{eq:ce_input} uniquely minimizes the Bellman objective $Q_J(x,\cdot)$ in~\eqref{eq:Q} at every $x\in X$.
% \end{corollary}

% \begin{proof}
% By Theorem~\ref{thm:source_gap}(c), $\Delta_J\equiv\Delta_{J_{\mathrm{DE}}}$ on
% $X$, so $J$ is Bellman-side NDE if and only if $J_{\mathrm{DE}}$ is. It therefore
% suffices to show that $J_{\mathrm{DE}}$ is Bellman-side NDE if and only if
% $J_{\mathrm{DE}}\equiv0$ on $X$.

% ($\Rightarrow$) Suppose $J_{\mathrm{DE}}$ is Bellman-side NDE. By Theorem~\ref{thm:widder_representation}, $J_{\mathrm{DE}}$ equals
% the Gaussian convolution of its boundary trace. This trace vanishes by
% Theorem~\ref{thm:source_gap}(a), so $J_{\mathrm{DE}}\equiv0$ on $X$.

% ($\Leftarrow$) Conversely, if $J_{\mathrm{DE}}\equiv0$, its dual-effect function
% is identically zero, so $J_{\mathrm{DE}}$ is Bellman-side NDE.

% The final CE-optimality claim follows from Theorem~\ref{thm:CE_opt_value_side}.
% \end{proof}

\begin{remark}
As noted earlier, Bellman-side NDE is sufficient, but not necessary, for CE
optimality. Corollary~\ref{cor:nde_iff_Jde_zero} characterizes Bellman-side NDE
by \(J_{\mathrm{DE}}\equiv0\), and hence provides one sufficient condition for
CE optimality. More generally, CE optimality may still hold when
\(J_{\mathrm{DE}}\not\equiv0\), provided that the contribution of
\(J_{\mathrm{DE}}\) to the expected cost-to-go does not shift the minimizer of
the Bellman objective away from the CE input. Thus, the structure of
\(J_{\mathrm{DE}}\) may yield less restrictive sufficient conditions for CE
optimality beyond Bellman-side NDE. Developing such conditions lies outside
the scope of the present paper.
\end{remark}

%The decomposition~\eqref{eq:dec} 
%thus isolates the dual effect in the single
%component $J_{\mathrm{DE}}$: since $J_{\mathrm{NDE}}$ is Bellman-side NDE, it
%contributes nothing to the dual-effect function, and the input affects the
%expected cost-to-go~\eqref{eq:expected_future_cost} only through $J_{\mathrm{DE}}$.
%Diagnosing and quantifying the dual effect of $J$ thus reduces to studying $J_{\mathrm{DE}}$.

\section{Numerical Examples}
\label{sec:numerical}

This section verifies the preceding characterizations numerically with some generic cost-to-go functions, which need not be generated by backward DP recursion.
% Section~\ref{subsec:quadratic}
%tests the source characterization of Section~\ref{sec:source} on the quadratic
%benchmark; 
%Section~\ref{subsec:quartic} tests the Gaussian-convolution
%representation of Section~\ref{sec:NDE_space} and the decomposition of
%Section~\ref{sec:decomposition} on the quartic benchmark.
Throughout, we take $k=1$ and $b=0$ in Examples~\ref{eg:quadratic}
and~\ref{eg:quartic}, so that the oracle input is $\upsilon^*(\theta)=\theta$ and
the CE input is $\nu(x)=m$. The quadratic stage cost~\eqref{eq:g_quadratic}
and the quartic stage cost~\eqref{eq:g_quartic} then reduce to
\begin{equation}\label{eq:g_quadratic_normalized}
	g(x,u) = (u-m)^2 + P,
\end{equation}
and
\begin{equation}\label{eq:g_quartic_normalized}
	g(x,u) = (u-m)^4 + 6P(u-m)^2 + 3P^2,
\end{equation}
respectively.

\subsection{Quadratic Cost Example}
\label{subsec:quadratic}

We first revisit the quadratic benchmark of Examples~\ref{eg:quadratic}
and~\ref{eg:quadratic_cont}. With $g(x,u)$ in~\eqref{eq:g_quadratic_normalized} and
$J(m,P)=m^2+\lambda P$, the Bellman objective satisfies
\begin{align}
\label{eq:policy_min}
    & \ Q_J(x,u) \notag\\
    &=(u-m)^2+P+E\big\{(m+\sqrt{\eta(P,u)}\,w)^2\!+\!\lambda\big(P\!-\!\eta(P,u)\big)\big\}
    \notag\\
    &=(u-m)^2+P+m^2+\eta(P,u)+\lambda\big(P-\eta(P,u)\big)
    \notag\\
    &=m^2+2P+(u-m)^2+(\lambda-1)\frac{P}{1+Pu^2},
\end{align}
where the second equality uses $w\sim N(0,1)$ to evaluate
$E\{(m+\sqrt{\eta}\,w)^2\}=m^2+\eta$, and the third uses
$P-\eta(P,u)=\frac{P}{1+Pu^2}$ from the variance reduction~\eqref{eq:eta}.
% The exploration-exploitation trade-off is shown in~\eqref{eq:policy_min}
% explicitly: The term $(u-m)^2$ penalizes deviations from the CE input; the last
% term captures how the posterior variance $\frac{P}{1+Pu^2}$ is valued. 

We numerically approximate the minimizers of the Bellman objective
in~\eqref{eq:policy_min} through an exhaustive search over a uniform
grid of $2001$ input values in $[-6,6]$.
At each belief state, we select the grid point with the lowest
objective value.
The minimizers are denoted by $\mu^*(m,P)$ for $\lambda\in\{0.5,1,2\}$. 
Fig.~\ref{fig:opt_policies} shows the resulting policy surfaces. The numerical 
results confirm the conclusion in Example~\ref{eg:quadratic_cont}: the sign of
$\lambda-1$ determines whether variance reduction, equivalently information acquisition, is rewarded, neutral, or discouraged.

\begin{itemize}
\item (Information neutrality.) When $\lambda=1$, the coefficient of $\frac{P}{1+Pu^2}$ is zero. Hence, the CE input $u=m$ is optimal, as shown by the orange surface in Fig.~\ref{fig:opt_policies}.

\item (Positive VOI.) 
When $\lambda>1$, the coefficient of $\frac{P}{1+Pu^2}$ is positive. A larger $|u|$ reduces this term and hence the expected cost-to-go. For $m\neq0$, the minimizer deviates from the CE input $u=m$ toward a larger $|u|$ to acquire information, as shown by the green surface
in Fig.~\ref{fig:opt_policies}.

\item (Negative VOI.)
When $\lambda<1$, the coefficient of $\frac{P}{1+Pu^2}$ is negative. A larger $|u|$ decreases $P/(1+Pu^2)$ and therefore increases the expected cost-to-go.
For $m\neq0$, the minimizer deviates from the CE input $u=m$ toward a smaller $|u|$, discouraging exploration, as shown by the blue surface in Fig.~\ref{fig:opt_policies}.
\end{itemize}

\begin{figure}
    \centering
    \includegraphics[width=0.8\linewidth]{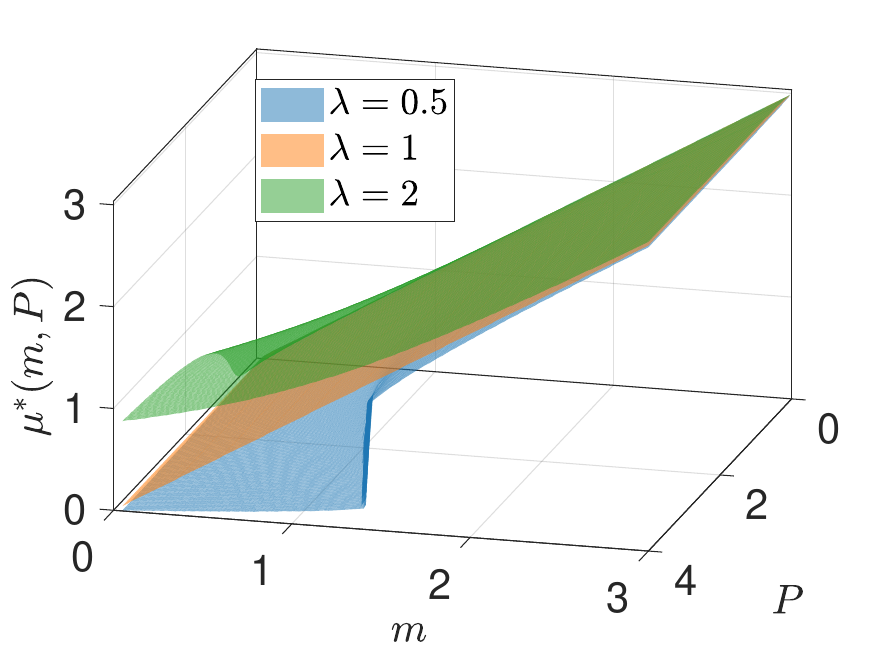}
    \caption{Minimizers of the Bellman objective~\eqref{eq:policy_min} in the  quadratic benchmark for
$\lambda=0.5,1,2$. The case $\lambda=1$ coincides with the CE policy, whereas
$\lambda>1$ favors exploration and $\lambda<1$ discourages exploration.}
    \label{fig:opt_policies}
\end{figure}

\subsection{Quartic Cost Example}
\label{subsec:quartic}

Consider the quartic stage cost given in~\eqref{eq:g_quartic_normalized}, which is minimized at the CE input $u=m$.

To construct $J$, we take the complete-information boundary function
$\phi_J(m)=m^4$. 
Thus, the Bellman-side NDE component $J_{\mathrm{NDE}}$ of $J$ is the Gaussian convolution of $\phi_J$:
\[
    J_{\mathrm{NDE}}(m,P)=E\{\phi_J(m+\sqrt P\,z)\}%=E\{(m+\sqrt P\,z)^4\}
    =m^4+6m^2P+3P^2.
\]
We then add a dual-effect component $J_{\mathrm{DE}}(m,P)=\beta P^2$, depending
only on the posterior variance, to obtain
\[
    J(m,P)\!=\!J_{\mathrm{NDE}}(m,P)\!+\!J_{\mathrm{DE}}(m,P)\!=\!m^4+6m^2P+(3+\beta)P^2.
    %\qquad \beta\in\Re.
\]
The source of $J$ is
$$
\begin{aligned}
    \mathcal S_J(m,P)
    % &
    % =
    % \partial_P J-\frac12 \partial_{mm} J 
    % \\
    % &
    =
    \mathcal S_{J_{\mathrm{NDE}}}(m,P)
    +
    \mathcal S_{J_{\mathrm{DE}}}(m,P) 
    % \\
    % &
    =
    0+2\beta P 
    % \\
    % &
    =
    2\beta P.
\end{aligned}
$$
According to Corollary~\ref{cor:zero_source}, $\beta>0$ corresponds to a positive-VOI $J$, $\beta=0$ corresponds to Bellman-side NDE, and $\beta<0$
corresponds to a negative-VOI $J$.

To verify this, we numerically evaluate the Bellman objective~\eqref{eq:Q}.
%\begin{equation}
%\label{eq:J_4rtic}
%    Q_J(x,u)=g(x,u)
%    +E\big\{J\big(m+\sqrt{\eta(P,u)}\,w,\,P-\eta(P,u)\big)\big\}.
%\end{equation}
Write $\eta=\eta(P,u)$. Since $J_{\mathrm{NDE}}$ is Bellman-side NDE, its expected cost-to-go is constant in $u$,
\[
    E\big\{J_{\mathrm{NDE}}\big(m+\sqrt{\eta}\,w,\,P-\eta\big)\big\}
    =J_{\mathrm{NDE}}(m,P).
\]
Since $J=J_{\mathrm{NDE}}+\beta P^2$, the expected cost-to-go~\eqref{eq:expected_future_cost} is
\begin{align}
    &\quad \ E\big\{J\big(m+\sqrt{\eta}\,w,\,P-\eta\big)\big\}\notag\\
    &=E\big\{J_{\mathrm{NDE}}\big(m+\sqrt{\eta}\,w,\,P-\eta\big)\big\}+\beta(P-\eta)^2
    \notag\\
    % &=J_{\mathrm{NDE}}(m,P)+\beta(P-\eta)^2
    % \notag\\
    &=J_{\mathrm{NDE}}(m,P)+\beta P^2-\beta\eta(2P-\eta),
\end{align}
where the second equality uses the Bellman-side NDE of $J_{\mathrm{NDE}}$ and expands $(P-\eta)^2$. Substituting into~\eqref{eq:Q} and using $g(x,u)$ in~\eqref{eq:g_quartic_normalized} yield
\begin{equation}
\label{eq:policy_min_4}
\begin{aligned}
    Q_J(x,u)
    &=(u-m)^4+6P(u-m)^2+3P^2 \\
    &\quad +J_{\mathrm{NDE}}(m,P)+\beta P^2
    -\beta\eta(2P-\eta) \\
    &=3P^2+J_{\mathrm{NDE}}(m,P)+\beta P^2 \\
    &\quad +\big[(u-m)^4+6P(u-m)^2-\beta\eta(2P-\eta)\big].
\end{aligned}
\end{equation}
where $\eta=\frac{P^2u^2}{1+Pu^2}$.
    
Fig.~\ref{fig:opt_policies_4} visualizes the policy $\mu^*(m,P)$ obtained by minimizing~\eqref{eq:policy_min_4} for $\beta\in\{-1,0,1\}$, obtained by applying a similar grid-search procedure to~\eqref{eq:policy_min}.
The results verify the source-based
conclusion: $\beta>0$ encourages exploration, $\beta=0$ recovers the CE policy, and $\beta<0$ discourages exploration.

\begin{figure}
    \centering
    \includegraphics[width=0.8\linewidth]{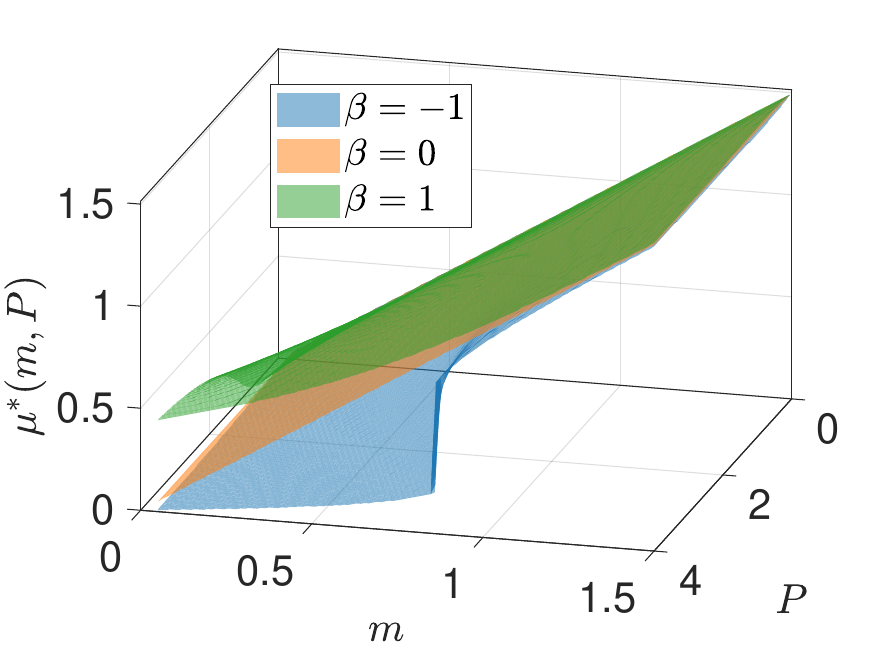}
    \caption{Minimizers of the Bellman objective~\eqref{eq:policy_min_4} in the quartic benchmark for
$\beta=-1,0,1$. The case $\beta=0$ coincides with the CE policy, whereas
$\beta=1$ favors exploration and $\beta=-1$ discourages exploration.}
    \label{fig:opt_policies_4}
\end{figure}

\section{Conclusion}
\label{sec:conclusion}

This paper revisited the relation between dual effect and CE optimality from a Bellman perspective. 
% Classical NDE notion requires that the conditional distribution of the estimation error is independent of the past control, and implies CE optimality.
Classical NDE, formulated on the posterior side, implies CE optimality but fails in the present setting.
% In our setting, this posterior-side condition does not hold. 
However, the Bellman objective minimization may still select the CE input when the expected cost-to-go is insensitive to the information generated by the input. 
This motivated us to introduce a notion of NDE at the level of the Bellman objective.

We characterized this Bellman-side NDE property, which is a sufficient condition for CE optimality. 
% We showed that it can be tested through a heat-equation diagnostic and.
We showed that for a given cost-to-go function, this condition can be checked by verifying whether the function satisfies a  heat equation.
We also established that Bellman-side NDE functions are Gaussian convolutions of complete-information boundary functions. 
% This representation identifies when a cost-to-go function has no Bellman-side dual effect, even though the posterior dynamics remain control-dependent. 
This representation also characterizes the space of Bellman-side NDE functions, and yields a unique decomposition of a cost-to-go function into a  no-dual-effect term and a dual-effect term.
Investigating this dual-effect term may yield less restrictive sufficient
conditions for CE optimality beyond Bellman-side NDE, which we leave for
future work.

% The framework separates two mechanisms that are often entangled in dual control problems. 
% Posterior-side dual effect describes how the control changes future information, whereas Bellman-side dual effect describes whether this change matters to the Bellman objective. 
% This distinction explains how CE optimality can hold beyond classical posterior-side NDE conditions and yields diagnostic tests, polynomial NDE bases, and decompositions into NDE and dual-effect components.

 Other future work includes extending the characterization to multi-input settings, where multiple exploration directions introduce additional geometric structure. Another possibility is to apply this work to concrete finite- and infinite-horizon DP problems, to identify conditions for policy-level CE optimality.

\appendices
\section{Proof of Lemma~\ref{lem:belief_state_transition_eta}}
\label{app:belief_state_transition_eta}

Conditional on $I_t$, we have $\theta-m_t\sim N(0,P_t)$, and $e_t\sim N(0,1)$ is independent of $\theta$ and $I_t$. Since $u_t$ is $I_t$-measurable,
\[
    y_t-u_tm_t=u_t(\theta-m_t)+e_t \!\sim\! N\bigl(0,\,1+P_tu_t^2\bigr)
    \ \text{conditional on } I_t.
\]
Hence $w_t=\operatorname{sgn}(u_t)(y_t-u_tm_t)/\sqrt{1+P_tu_t^2}$ satisfies
$w_t\mid I_t\sim N(0,1)$: for $u_t\neq0$ the sign factor preserves the standard
normal law by symmetry, and for $u_t=0$ the convention $\operatorname{sgn}(0)=1$
gives $w_t=e_t\sim N(0,1)$. Since $x_t=(m_t,P_t)$ is $I_t$-measurable, it follows
that $w_t\sim N(0,1)$ conditional on $(x_t,u_t)$.

By the definition of $w_t$, the innovation is
$y_t-u_tm_t=\operatorname{sgn}(u_t)\sqrt{1+P_tu_t^2}\,w_t$. Substituting this into
the mean update~\eqref{eq:theta_update_app} and using
$u_t\operatorname{sgn}(u_t)=|u_t|$ and
$\sqrt{\eta(P_t,u_t)}=\frac{P_t|u_t|}{\sqrt{1+P_tu_t^2}}$,
$$
\begin{aligned}
        m_{t+1}
    &=m_t+\frac{P_tu_t}{1+P_tu_t^2}\,\operatorname{sgn}(u_t)\sqrt{1+P_tu_t^2}\,w_t\\
    &=m_t+\frac{P_t|u_t|}{\sqrt{1+P_tu_t^2}}\,w_t
    =m_t+\sqrt{\eta(P_t,u_t)}\,w_t.
\end{aligned}
$$
% where the case $u_t=0$ holds trivially, as $\eta(P_t,0)=0$ makes the last term
% vanish.

The variance update~\eqref{eq:P_update_app} gives
$P_t-P_{t+1}=P_t^2u_t^2/(1+P_tu_t^2)=\eta(P_t,u_t)$, so
$P_{t+1}=P_t-\eta(P_t,u_t)$. %This establishes the representation~\eqref{eq:dynamics_f_def}.

\section{Proof of Lemma~\ref{lem:regularity_consequences}}
\label{app:regularity_proof}

We first prove the domination~\eqref{eq:domination}.
For $\eta\in[0,\bar\eta]$, the points
$(m+\sqrt{\eta}\,w,P-\eta)$ lie in
$\Re\times\mathcal I$, where
\[
    \mathcal I=[P-\bar\eta,P].
\]
Since $\bar\eta<P$, $\mathcal I$ is a compact subset of $(0,\infty)$.
Assumption~\ref{ass:regularity}(b) therefore yields constants
$c_{\mathcal I}>0$ and $q_{\mathcal I}\geq0$ such that
\[
\begin{aligned}
        \max\bigl\{|J|,|\partial_m J|,|\partial_{mm} J|,&|\partial_P J|\bigr\}
    (m+\sqrt{\eta}\,w,P-\eta)\\
    \leq
    &c_{\mathcal I}
    \bigl(1+|m+\sqrt{\eta}\,w|^{q_{\mathcal I}}\bigr).
\end{aligned}
\]
Since
\[
    |m+\sqrt{\eta}\,w|
    \leq
    |m|+\sqrt{\bar\eta}\,|w|,
    \qquad
    \eta\in[0,\bar\eta],
\]
the right-hand side is bounded uniformly in $\eta\in[0,\bar\eta]$ by
\begin{equation}
    G(w)
    =
    c_{\mathcal I}
    \Bigl(
        1+
        \bigl(|m|+\sqrt{\bar\eta}\,|w|\bigr)^{q_{\mathcal I}}
    \Bigr).
    \label{eq:Gw}
\end{equation}
The function $G$ has polynomial growth. Since $w\sim N(0,1)$ has finite
absolute moments of all orders,
\[
    E\{|w|^kG(w)\}<\infty,
    \qquad \forall\,k\geq0.
\]

\emph{(a).}
The claim follows directly from the domination~\eqref{eq:domination} and
$E\{G(w)\}<\infty$.

\emph{(b).}
Fix a compact interval $[\eta_1,\eta_2]\subset(0,\bar\eta)$.
By Assumption~3(a), for every fixed $w\in\Re$, the chain
rule gives
\begin{align*}
\frac{d}{d\eta}J(m+\sqrt{\eta}\,w,P-\eta)
&=
\frac{w}{2\sqrt{\eta}}\,
\partial_m J(m+\sqrt{\eta}\,w,P-\eta)
\\
&\quad
-\partial_P J(m+\sqrt{\eta}\,w,P-\eta).
\end{align*}
For every $\eta\in[\eta_1,\eta_2]$, the domination bound
in (26) therefore yields
\[
\left|
\frac{d}{d\eta}J(m+\sqrt{\eta}\,w,P-\eta)
\right|
\le
\left(1+\frac{|w|}{2\sqrt{\eta_1}}\right)G(w).
\]
The right-hand side is independent of $\eta$ and integrable,
since
\[
E\left\{
\left(1+\frac{|w|}{2\sqrt{\eta_1}}\right)G(w)
\right\}
=
E\{G(w)\}
+
\frac{E\{|w|G(w)\}}{2\sqrt{\eta_1}}
<\infty.
\]
Hence, differentiation is interchanged with expectation~\cite[Thm.~2.27(b)]{folland1999real},
so $F_x$ is differentiable on $(\eta_1,\eta_2)$ with $F'_x(\eta)$ in~\eqref{eq:diff_under_int}.
Since $[\eta_1,\eta_2]\subset(0,\bar\eta)$ is arbitrary,
this proves (b).

% \emph{(b).}
% Fix a compact interval $[\eta_1,\eta_2]\subset(0,\bar\eta)$. By
% Assumption~\ref{ass:regularity}(a), the chain rule gives
% \[
% \begin{aligned}
%         % F_x'(\eta)
%     % =
%     &\frac{d}{d\eta}
%     J(m+\sqrt{\eta}\,w,P-\eta)\\
%     &=
%     \frac{w}{2\sqrt{\eta}}
%     \partial_m J(m+\sqrt{\eta}\,w,P-\eta)
%     -
%     \partial_P J(m+\sqrt{\eta}\,w,P-\eta).
% \end{aligned}
% \]
% For $\eta\in[\eta_1,\eta_2]$, the domination~\eqref{eq:domination} yields
% \[
%     \left|
%         % \frac{\partial}{\partial\eta}
%         % J(m+\sqrt{\eta}\,w,P-\eta)
%         F_x'(\eta)
%     \right|
%     \leq
%     \left(
%         1+\frac{|w|}{2\sqrt{\eta_1}}
%     \right)G(w), 
% \] %e[Thm.~2.27(b)]
% and the right-hand side is integrable. Hence, differentiation and expectation
% on the left-hand side of $F_x'(\eta)$ can be interchanged
% by~\cite[Thm.~2.27(b)]{folland1999real}. Since
% $[\eta_1,\eta_2]\subset(0,\bar\eta)$ was arbitrary,~(b) follows.

\emph{(c).}
Fix $\eta\in(0,\bar\eta]$ and define
\[
    B(w)
    =
    \partial_m J(m+\sqrt{\eta}\,w,P-\eta).
\]
By Assumption~\ref{ass:regularity}(a), $B$ is continuously differentiable on
$\Re$, with
\[
    B'(w)
    =
    \sqrt{\eta}\,
    \partial_{mm} J(m+\sqrt{\eta}\,w,P-\eta).
\]
Moreover, the domination~\eqref{eq:domination} gives
\[
    |B(w)|\leq G(w),
    \qquad
    |B'(w)|\leq\sqrt{\eta}\,G(w).
\]
Hence, the integrability conditions required for Stein's identity are satisfied:
\[
    E\{|wB(w)|\}
    \leq
    E\{|w|G(w)\}
    <\infty,
\]
and
\[
   E\{|B'(w)|\}
    \leq
    \sqrt{\eta}\,E\{G(w)\}
    <\infty.
\]
Thus Stein's identity applies and yields~\eqref{eq:stein_id}.

\emph{(d).}
Fix $\eta_0\in[0,\bar\eta]$ and let $\eta\to\eta_0$. For every fixed
$w\in\Re$,
\[
    (m+\sqrt{\eta}\,w,P-\eta)
    \to
    (m+\sqrt{\eta_0}\,w,P-\eta_0).
\]
By Assumption~\ref{ass:regularity}(a), $J$ is continuous on $X$, and hence
\[
    J(m+\sqrt{\eta}\,w,P-\eta)
    \to
    J(m+\sqrt{\eta_0}\,w,P-\eta_0).
\]
Moreover, by~\eqref{eq:domination},
\[
    |J(m+\sqrt{\eta}\,w,P-\eta)|
    \leq G(w),
    \qquad
    \eta\in[0,\bar\eta],
\]
where $E\{G(w)\}<\infty$. The dominated convergence theorem~\cite{rudin1974real} thus gives
\[
    F_x(\eta)
    \to
    F_x(\eta_0).
\]
Since $\eta_0\in[0,\bar\eta]$ was arbitrary, $F_x$ is continuous on
$[0,\bar\eta]$. In particular,
\[
    F_x(0)
    =
    E\{J(m,P)\}
    =
    J(m,P).
\]

\emph{(e).}
Combining~\eqref{eq:diff_under_int} with Stein's identity
in~\eqref{eq:stein_id} gives, for every $\eta\in(0,\bar\eta)$,
% \[
% \begin{aligned}
%     F_x'(\eta)
%     &=
%     E\left\{
%         \frac{1}{2}
%         \partial_{mm} J(m+\sqrt{\eta}\,w,P-\eta)
%         -
%         \partial_P J(m+\sqrt{\eta}\,w,P-\eta)
%     \right\}.
% \end{aligned}
% \]
% Therefore, by~\eqref{eq:domination},
% \[
% \begin{aligned}
%     |F_x'(\eta)|
%     &\leq
%     E\left\{
%         \frac{1}{2}
%         \big|
%             \partial_{mm} J(m+\sqrt{\eta}\,w,P-\eta)
%         \big|
%         +
%         \big|
%             \partial_P J(m+\sqrt{\eta}\,w,P-\eta)
%         \big|
%     \right\} \\
%     &\leq
%     \frac{3}{2}E\{G(w)\},
%     \qquad
%     \forall\,\eta\in(0,\bar\eta).
% \end{aligned}
% \]
\[
\begin{aligned}
    F_x'(\eta)
    &= E\Bigl\{
        \frac{1}{2}
        \partial_{mm} J(m+\sqrt{\eta}\,w,P-\eta) \\
    &\qquad {}-
        \partial_P J(m+\sqrt{\eta}\,w,P-\eta)
    \Bigr\}.
\end{aligned}
\]
Therefore, by~\eqref{eq:domination},
\[
\begin{aligned}
    |F_x'(\eta)|
    &\leq E\Bigl\{
        \frac{1}{2}
        \big|\partial_{mm} J(m+\sqrt{\eta}\,w,P-\eta)\big| \\
    &\qquad {}+
        \big|\partial_P J(m+\sqrt{\eta}\,w,P-\eta)\big|
    \Bigr\} \\
    &\leq \frac{3}{2}E\{G(w)\},
    \qquad \forall\,\eta\in(0,\bar\eta).
\end{aligned}
\]
Thus $F_x'$ is uniformly bounded on $(0,\bar\eta)$. By the mean value theorem,
for any $\eta_1,\eta_2\in(0,\bar\eta)$,
\[
    |F_x(\eta_2)-F_x(\eta_1)|
    \leq
    \frac{3}{2}E\{G(w)\}
    |\eta_2-\eta_1|.
\]
Using the continuity of $F_x$ on $[0,\bar\eta]$ established in~(d), the same
bound extends to $\eta_1,\eta_2\in[0,\bar\eta]$. Hence $F_x$ is Lipschitz
continuous on $[0,\bar\eta]$, and therefore absolutely continuous on
$[0,\bar\eta]$.

\section{Proof of Lemma~\ref{lem:local_voi}}
\label{app:local_voi}
% \begin{proof}
By Theorem~\ref{thm:exact_source_representation},
\begin{equation}
        \Delta_J(x,\eta)
% =E\big\{
%         J\big(m+\sqrt{\eta}\,w,\ P-\eta\big)
%     \big\}
%     -
%     J(m,P)
    =
    -
    \int_0^\eta
    E\big\{
        \mathcal S_J\big(m+\sqrt{s}\,w,\ P-s\big)
    \big\}\,ds .\label{eq:delta_J_int}
\end{equation}
It remains to characterize the behavior of the integrand as \(s\downarrow0\).
Define
\[
    H(s)
    =
    E\big\{
        \mathcal S_J\big(m+\sqrt{s}\,w,\ P-s\big)
    \big\}.
\]

First, we verify pointwise convergence  of the integrand. Since \(\partial_P J\)
and \(\partial_{mm} J\) are continuous on \(X\) by
Assumption~\ref{ass:regularity}(a), the source \(\mathcal S_J\) is
continuous on \(X\). Hence, for every fixed \(w\in\Re\),
\begin{equation}
\label{eq:dct_cont}
    \mathcal S_J\big(m+\sqrt{s}\,w,\ P-s\big)
    \to
    \mathcal S_J(m,P),
    \qquad
    s\downarrow0 .
\end{equation}

Second, we establish an integrable dominating function. Applying
Lemma~\ref{lem:regularity_consequences} with $\bar\eta=P/2$ in
\eqref{eq:domination} gives, for every $s\in[0,P/2]$,
\[
    |\partial_P J(m+\sqrt{s}\,w,P-s)|
    \leq G(w),
\]
and
\[
    |\partial_{mm} J(m+\sqrt{s}\,w,P-s)|
    \leq G(w).
\]
Thus,
% \begin{equation}
% \label{eq:dct_bound}
%     \big|
%         \mathcal S_J\big(m+\sqrt{s}\,w,\ P-s\big)
%     \big|
%     \le
%     \big|
%          \partial_P J\big(m+\sqrt{s}\,w,\ P-s\big)
%     \big|
%     +
%     \frac{1}{2}\big|
%          \partial_{mm} J\big(m+\sqrt{s}\,w,\ P-s\big)
%     \big|
%     \leq
%     \frac{3}{2}G(w).
% \end{equation}
\begin{equation}
\label{eq:dct_bound}
\begin{aligned}
    \big|\mathcal S_J(m+\sqrt{s}\,w,P-s)\big| \le
    &\big|\partial_P J(m+\sqrt{s}\,w,P-s)\big| \\
    &+
    \frac{1}{2}\big|\partial_{mm} J(m+\sqrt{s}\,w,P-s)\big| \\
    \le &\frac{3}{2}G(w).
\end{aligned}
\end{equation}
Since $E\{G(w)\}<\infty$, the dominated convergence theorem~\cite{rudin1974real} gives
\[
    % H(s)
    % =
    E\big\{
        \mathcal S_J\big(m+\sqrt{s}\,w,\ P-s\big)
    \big\}
    \to
    \mathcal S_J(m,P),
    \qquad
    s\downarrow0 .
\]
Equivalently,
\[
        E\big\{
        \mathcal S_J\big(m+\sqrt{s}\,w,\ P-s\big)
    \big\}
    =
    \mathcal S_J(m,P)+o(1),
    \qquad
    s\downarrow0.
\]
Substituting this expansion into~\eqref{eq:delta_J_int} proves \eqref{eq:local_expansion}, i.e.,
\[
\begin{aligned}
    \Delta_J(x,\eta)
    % &=
    % -\int_0^\eta     E\big\{
    %     \mathcal S_J\big(m+\sqrt{s}\,w,\ P-s\big)
    % \big\}
    % \,ds \\
    \!=\!
    -\int_0^\eta
    \bigl(\mathcal S_J(m,P)+o(1)\bigr)\,ds 
    \!=\!
    -\eta\,\mathcal S_J(m,P)+o(\eta),
\end{aligned}
\]
as $ \eta\downarrow0$.  Consequently,
\[
    \frac{\Delta_J(x,\eta)}{\eta}
    \to
    -\mathcal S_J(m,P),
    \qquad
    \eta\downarrow0.
\]
If $\mathcal S_J(m,P)=0$, then
$\Delta_J(x,\eta)=o(\eta)$, which proves~(a).
If $\mathcal S_J(m,P)>0$, then the above ratio is negative for all
sufficiently small $\eta>0$, and hence $\Delta_J(x,\eta)<0$, proving~(b).
Similarly, if $\mathcal S_J(m,P)<0$, then the ratio is positive for all
sufficiently small $\eta>0$, and hence $\Delta_J(x,\eta)>0$, proving~(c).

\section{Proof of Lemma~\ref{lem:H_to_B}}
\label{app:H_to_B}
	We first prove continuity. By Assumption~\ref{ass:regularity}(a), $J(\cdot,P)$ is
	continuous on $\Re$ for each $P>0$. By Assumption~\ref{ass:boundary_trace}(a),
	$J(\cdot,P)\to\phi_J$ as $P\downarrow0$ uniformly on every compact interval
	$K\subset\Re$. A locally uniform limit of continuous functions is continuous~\cite[Thm.~7.12]{rudin1976principles},
	so $\phi_J$ is continuous on $\Re$.
	
	Then, we prove the polynomial growth. Fix $\bar P>0$ and let $c>0$, $q\ge0$ be the
	constants from Assumption~\ref{ass:boundary_trace}(b), so that
	\[
	|J(m,P)|\le c\,(1+|m|^{q}),
	\qquad \forall m\in\Re,\ P\in(0,\bar P].
	\]
	Fix $m\in\Re$. Letting $P\downarrow0$ and using
	Assumption~\ref{ass:boundary_trace}(a), $J(m,P)\to\phi_J(m)$, so 
	\[
	|\phi_J(m)|
	= \lim_{P\downarrow0}|J(m,P)|
	\le c\,(1+|m|^{q}).
	\]
	The bound holds for all $m\in\Re$ with the same  $c,q$, which is
	the claim.

\section{Proof of Lemma~\ref{lem:B_to_H}}
\label{app:B_to_H}

Since \(\phi\in\mathcal B\) is continuous and of polynomial growth, standard
properties of the Gaussian heat kernel imply that
\(J_\phi\in C^\infty(X)\),
\[
    \partial_PJ_\phi
    =
    \frac12\partial_{mm}J_\phi,
\]
and
\[
    J_\phi(\cdot,P)\to\phi
    \qquad\text{locally uniformly as }P\downarrow0;
\]
see~\cite[Sec.~3.4, Thm.~3.41]{arendt2023partial} and
\cite[Ch.~4, Sec.~A]{folland1995partial}.
Hence \(\mathcal S_{J_\phi}\equiv0\) on \(X\), \(\phi_{J_\phi}=\phi\), and 
Assumptions~\ref{ass:regularity}(a)
and~\ref{ass:boundary_trace}(a) hold.

It remains to verify the required polynomial-growth bounds in Assumptions~\ref{ass:regularity}(b) and~\ref{ass:boundary_trace}(b). Since
\(\phi\in\mathcal B\), there exist \(c_\phi>0\) and \(q_\phi\ge0\) such that
\(|\phi(y)|\le c_\phi(1+|y|^{q_\phi})\) for all \(y\in\Re\). Using
\((a+b)^{q_\phi}\le2^{q_\phi}(a^{q_\phi}+b^{q_\phi})\) for \(a,b\ge0\), we
obtain the pointwise bound
\begin{equation}
\label{eq:gc_pointwise}
    |\phi(m+\sqrt P\,z)|
    \le
    c_\phi 2^{q_\phi}
    \left(
        1+|m|^{q_\phi}+P^{q_\phi/2}|z|^{q_\phi}
    \right),
\end{equation}
where $m,z\in\Re$,$ P>0 $.
Fix \(\bar P>0\). Taking expectations in \eqref{eq:gc_pointwise} over
\(z\sim N(0,1)\) and using the finiteness of Gaussian moments gives
\[
    |J_\phi(m,P)|
    \le
    E\big\{|\phi(m+\sqrt P\,z)|\big\}
    \le
    C_1(1+|m|^{q_\phi}),
\]
where $m\in\Re $, $ 0<P\le\bar P$, 
and \(C_1\) depends only on \(c_\phi\), \(q_\phi\) and \(\bar P\). Since
\(\bar P>0\) was arbitrary, this is Assumption~\ref{ass:boundary_trace}(b).

To bound the partial derivatives required in
Assumption~\ref{ass:regularity}(b), write the Gaussian convolution as
\[
    J_\phi(m,P)
    \!=\!
    \int_{\Re}
        \phi(y)K_P(m-y)\,dy,
    \
    K_P(r)
    \!=\!
    \frac{1}{\sqrt{2\pi P}}
    e^{-r^2/(2P)}.
\]
% The general differentiation rule for convolutions is obtained by
% differentiating the kernel under the integral sign; see, e.g.,
% \cite[Thm.~0.14]{folland1995partial}. 
Since $\phi$ has polynomial growth and the Gaussian kernel
derivatives decay faster than any polynomial, the integrands
$\phi(y)\partial_m K_P(m-y)$ and
$\phi(y)\partial_{mm} K_P(m-y)$ admit a common integrable
bound, locally uniformly in $(m,P)\in X$.
Hence, differentiation under the integral sign is justified.
Indeed,
\[
\begin{aligned}
    \partial_m K_P(m-y)
    &=
    -\frac{m-y}{P}K_P(m-y),\\
    \partial_{mm}K_P(m-y)
    &=
    \left(
        \frac{(m-y)^2}{P^2}-\frac1P
    \right)
    K_P(m-y).
\end{aligned}
\]
Thus, each derivative is a polynomial multiplying the
Gaussian kernel, and hence remains integrable against a polynomially growing
\(\phi\) for \(P\) bounded away from zero. Therefore,
\[
\begin{aligned}
    \partial_mJ_\phi(m,P)
    &=
    \int_{\Re}
        \phi(y)\partial_mK_P(m-y)\,dy,\\
    \partial_{mm}J_\phi(m,P)
    &=
    \int_{\Re}
        \phi(y)\partial_{mm}K_P(m-y)\,dy.
\end{aligned}
\]
Using the change of variables \(y=m+\sqrt P\,z\), with \(z\sim N(0,1)\),
these expressions become
\[
\begin{aligned}
    \partial_mJ_\phi(m,P)
    &=
    P^{-1/2}
    E\big\{
        z\,\phi(m+\sqrt P\,z)
    \big\},\\
    \partial_{mm}J_\phi(m,P)
    &=
    P^{-1}
    E\big\{
        (z^2-1)\phi(m+\sqrt P\,z)
    \big\}.
\end{aligned}
\]
Now let \(I=[P_1,P_2]\subset(0,\infty)\) be compact. Since \(P\ge P_1>0\), and
using \(|z^2-1|\le1+z^2\),
\[
\begin{aligned}
    |\partial_mJ_\phi(m,P)|
    &\le
    P_1^{-1/2}
    E\big\{
        |\phi(m+\sqrt P\,z)|\,|z|
    \big\},\\
    |\partial_{mm}J_\phi(m,P)|
    &\le
    P_1^{-1}
    E\big\{
        |\phi(m+\sqrt P\,z)|(1+|z|^2)
    \big\}.
\end{aligned}
\]
Applying \eqref{eq:gc_pointwise} with \(P\le P_2\) and using the finiteness of
Gaussian moments, it follows that
\[
    |\partial_mJ_\phi(m,P)|
    +
    |\partial_{mm}J_\phi(m,P)|
    \le
    C_2(1+|m|^{q_\phi}),
\]
where $m\in\Re$, $ P\in I$, 
and \(C_2\) depends only on \(c_\phi\), \(q_\phi\), \(P_1\) and \(P_2\).
Together with
\(
    \partial_PJ_\phi
    =
    \frac12\partial_{mm}J_\phi
\)
and the bound for \(|J_\phi|\) established above with \(\bar P=P_2\), this
yields
\[
    \max\{
        |J_\phi|,
        |\partial_mJ_\phi|,
        |\partial_{mm}J_\phi|,
        |\partial_PJ_\phi|
    \}(m,P)
    \le
    c_I(1+|m|^{q_\phi}),
\]
where $ m\in\Re $, $P\in I$ and \(c_I\) is the largest of the constants involved. Thus Assumption~3(b)
holds with \(q_I=q_\phi\).

Therefore \(J_\phi\in\mathcal H\), and since \(\mathcal S_{J_\phi}\equiv0\)
on \(X\), the definition of \(\mathcal H_{\mathrm{NDE}}\) gives
\(J_\phi\in\mathcal H_{\mathrm{NDE}}\).

\section{Proof of Theorem~\ref{thm:widder_representation}}
\label{app:widder_representation}

Suppose first that \(J\) is Bellman-side NDE on \(X\). Then, for every
\(x=(m,P)\in X\) and every \(\eta\in[0,P)\),
\[
    J(m,P)
    =
    E\big\{
        J(m+\sqrt{\eta}\,z,P-\eta)
    \big\},
    \qquad z\sim N(0,1).
\]
As \(\eta\uparrow P\), for every fixed \(z\in\Re\),
\[
    J(m+\sqrt{\eta}\,z,P-\eta)
    \to
    \phi_J(m+\sqrt P\,z),
\]
by Assumption~\ref{ass:boundary_trace}(a) and the continuity of
\(\phi_J\). Moreover, Assumption~\ref{ass:boundary_trace}(b) provides an
integrable polynomial bound, uniform for \(\eta\) sufficiently close to
\(P\). Hence, by the dominated convergence theorem~\cite{rudin1974real},
\[
    J(m,P)
    =
    E\big\{
        \phi_J(m+\sqrt P\,z)
    \big\}
    =
    J_{\phi_J}(m,P).
\]
Thus \(J=J_{\phi_J}\) on \(X\).

Conversely, suppose that \(J=J_{\phi_J}\) on \(X\). Since
\(J\in\mathcal H\), its complete-information boundary function satisfies
\(\phi_J\in\mathcal B\). By Lemma~\ref{lem:B_to_H},
\(J_{\phi_J}\) is Bellman-side NDE on \(X\). Therefore \(J\) is
Bellman-side NDE on \(X\).

% \section*{Acknowledgment}

% The preferred spelling of the word ``acknowledgment'' in American English is 
% without an ``e'' after the ``g.'' Use the singular heading even if you have 
% many acknowledgments. Avoid expressions such as ``One of us (S.B.A.) would 
% like to thank $\ldots$ .'' Instead, write ``F. A. Author thanks $\ldots$ .'' In most 
% cases, sponsor and financial support acknowledgments are placed in the 
% unnumbered footnote on the first page, not here.

\section*{References}

\bibliographystyle{IEEEtran}
\bibliography{references}

\begin{IEEEbiography}[{\includegraphics[width=1in,height=1.25in,clip,keepaspectratio]{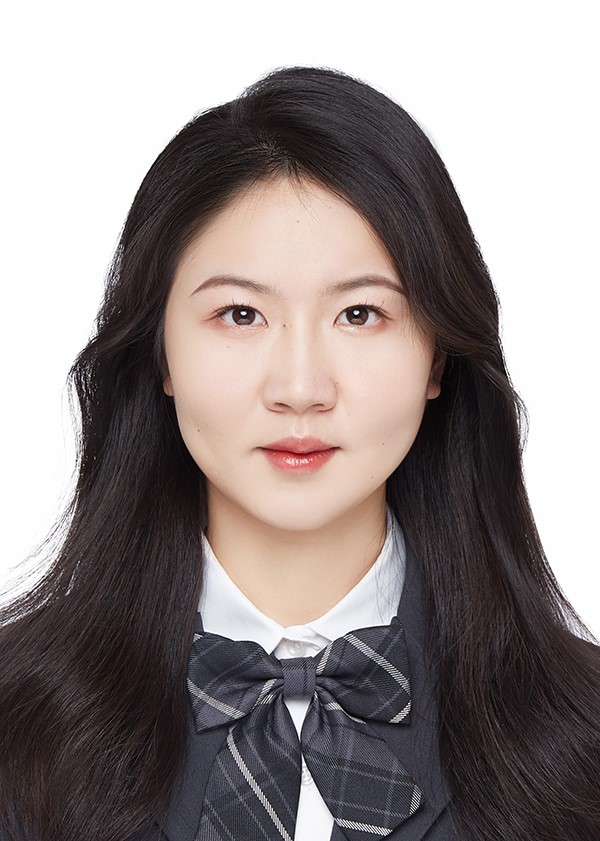}}] {Ying Wang} received the B.S. degree in automation from Northwestern Polytechnical University, Xi'an, China in 2020, and the M.Sc. degree in control science and engineering from Shanghai Jiao Tong University, Shanghai, China, in 2023.
She is currently pursuing a Ph.D. degree in electrical engineering at KTH Royal Institute of Technology, Stockholm, Sweden. Her research interests include dual control, system identification and input design.
\end{IEEEbiography}

\begin{IEEEbiography}[{\includegraphics[width=1in,height=1.25in,clip,keepaspectratio]{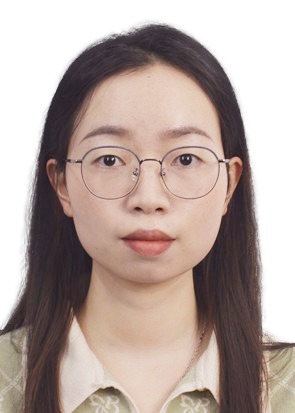}}] {Yue Ju} received the B.E. degree in automation from the Nanjing University of Science and Technology, Nanjing, China, in 2017 and the Ph.D. degree in computer and information engineering from the Chinese University of Hong Kong, Shenzhen, China, in 2022.
She is currently a postdoc at the KTH Royal Institute of Technology. She has been mainly working in the area of system identification.
\end{IEEEbiography}

\begin{IEEEbiography}[{\includegraphics[width=1in,height=1.25in,clip,keepaspectratio]{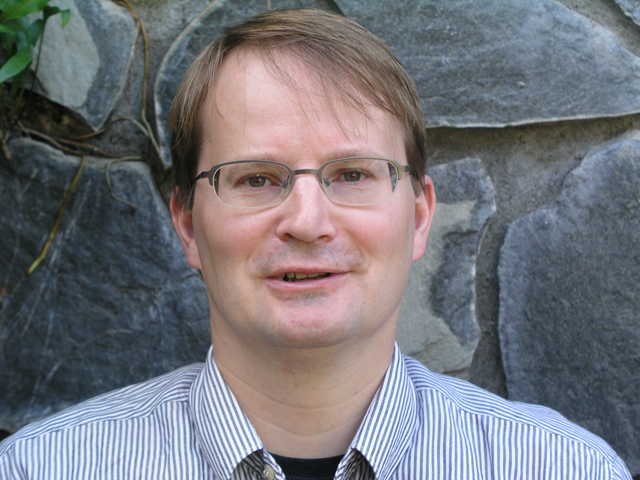}}] {H\r{a}kan Hjalmarsson} was born in 1962. He received the M.S. degree in Electrical Engineering in 1988, and the Licentiate degree and the Ph.D. degree in Automatic Control in 1990 and 1993, respectively, all from Link{\"o}ping University, Sweden. He has held visiting research positions at California Institute of Technology, Louvain University and at the University of Newcastle, Australia. He has served as an Associate Editor for Automatica (1996-2001), and IEEE Transactions on Automatic Control (2005-2007) and been Guest Editor for European Journal of Control and Control Engineering Practice. He is Professor at the Division of Decision and Control Systems, School of Electrical Engineering and Computer Science, KTH, Stockholm, Sweden and also affiliated with the Competence Centre for Advanced BioProduction by Continuous Processing, AdBIOPRO. He is an IEEE Fellow and past Chair of the IFAC Coordinating Committee CC1 Systems and Signals. In 2001, he received the KTH award for outstanding contribution to undergraduate education. He was General Chair for the IFAC Symposium on System Identification held in 2018. His research interests include system identification, learning of dynamical systems for control, process modeling control and also estimation in communication networks. 
\end{IEEEbiography}

\end{document}